\newtheorem{theorem}{Theorem}[section]
\newtheorem{proposition}[theorem]{Proposition}
\newtheorem{definition}[theorem]{Definition}
\newtheorem{corollary}[theorem]{Corollary}
\newtheorem{lemma}[theorem]{Lemma}
\newtheorem{remark}[theorem]{Remark}
\DeclareMathOperator{\sgn}{sgn}
\numberwithin{equation}{section}
\numberwithin{figure}{section}
\begin{document}
\begin{center}
\Large{\textbf{Global Solutions to the Discrete Nonlinear Breakage Equations without Mass Transfer}}
\end{center}
\medskip
\medskip
\centerline{${\text{Mashkoor~ Ali$^1$}}$ and ${\text{Philippe~ Lauren\c{c}ot$^2$}}$}\let\thefootnote\relax

\footnotetext{$^1$ E-mail: mashkoor.ali@jgu.edu.in \quad $^{2}$ E-mail: philippe.laurencot@univ-smb.fr}
\medskip
{\footnotesize

  \centerline{ ${}^{1}$ Jindal Global Business School, O.P. Jindal Global University,}
   \centerline{Sonipat-131001, Haryana, India}

\centerline{ ${}^{2}$ Laboratoire de Math\'ematiques (LAMA) UMR 5127, Universit\'e Savoie Mont Blanc, CNRS,}
   \centerline{ F-73000, Chamb\'ery, France}
 
}

\bigskip

\begin{quote}
{\small {\em \bf Abstract.} Global existence of mild solutions to the discrete collisional breakage equations is established for a broad class of collision kernels, without imposing any growth assumptions. In addition, classical solutions are constructed, and uniqueness is proved for an appropriate class of kinetic coefficients and initial data. The large time behavior of solutions is also discussed, and numerical simulations are presented to support the theoretical results.
 }
\end{quote}

\vspace{.3cm}
\noindent
{\rm \bf Mathematics Subject Classification(2020).} Primary: 34A12; Secondary: 34C11.\\
{ \bf Keywords:} Collision-induced fragmentation equations, mild solution, classical solution, uniqueness.\\

\section{\textbf{Introduction}} \label{SEC1}

Coagulation-fragmentation processes naturally occur in the dynamics of cluster growth and describe the way a system of clusters can merge to form larger ones or fragment to form smaller ones. Models of cluster growth arise in a wide variety of situations, including aerosol science, astrophysics, colloidal chemistry, polymer science, and biology. In the model considered in this paper, clusters are assumed to be identified by a single parameter, their size, which ranges in the set of positive integers $\mathbb{N}\setminus\{0\}$. Equivalently, each cluster is made of a finite number of identical elementary units and this number is usually referred to as their size. In the following, we shall refer to $i$-clusters for clusters made of $i$ elementary units, $i\ge 1$. In contrast, the size of clusters may take any value in $(0,\infty)$ in the so-called continuous model.

On the one hand, coagulation is inherently nonlinear, as two or more clusters merge to form a larger cluster (see \Cref{Coag}). On the other hand, fragmentation or breakage can be classified into two categories: linear (spontaneous) fragmentation and nonlinear (collision-induced) fragmentation. In the former process, a cluster breaks apart, either spontaneously due to intrinsic instabilities, or through external perturbations, such as mechanical stress or radiation (see \Cref{Frag1}). In nonlinear or collision-induced fragmentation, the collision of two clusters may lead to an exchange of mass between the clusters besides their splitting. A typical example of a collision-induced fragmentation event with mass transfer is the formation of clusters with respective sizes $i+k$ and $j-k$ after the collision of two clusters with respective sizes $i$ and $j>k$, see \Cref{Frag2}. As a consequence, the maximal size of the clusters may increase when mass transfer is possible. Such a phenomenon does not take place in nonlinear fragmentation without mass transfer, which somewhat corresponds to the situation where one of the incoming clusters splits into smaller fragments while the other remains intact, see \Cref{Frag3}.

\begin{figure}[h]
    \centering
    \begin{tikzpicture}
        \draw[fill=gray!20] (-2,0) circle (0.5);
        \node at (-2,0) {$i$};
        
        \node at (-1,0) {$+$};
        
        \draw[fill=gray!20] (0,0) circle (0.5);
        \node at (0,0) {$j$};
        
        \draw[-Stealth, thick] (1,0) -- (2,0);
        
        \draw[fill=gray!50] (3.2,0) circle (0.7);
        \node at (3.2,0) {$i+j$};
    \end{tikzpicture}
    \caption{Illustration of the coagulation process where a $i$-cluster and a $j$-cluster combine to form a $i+j$-cluster.}
    \label{Coag}
\end{figure}
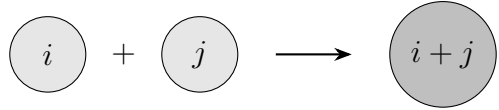

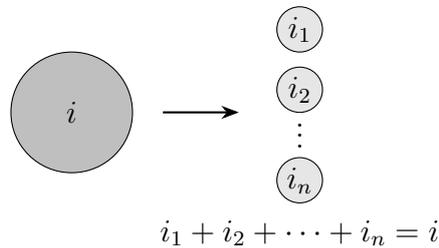
\begin{figure}[h]
\centering
\begin{tikzpicture}
    \draw[fill=gray!50] (-2,0) circle (0.8);
    \node at (-2,0) {$i$};
    
    \draw[-Stealth, thick] (-0.8,0) -- (0.2,0);
    
    \draw[fill=gray!20] (1,1.1) circle (0.3);
    \node at (1,1.1) {$i_1$};
    
    \draw[fill=gray!20] (1,0.3) circle (0.3);
    \node at (1,0.3) {$i_2$};
    
    \node at (1,-0.2) {$\vdots$};
    
    \draw[fill=gray!20] (1,-0.9) circle (0.3);
    \node at (1,-0.9) {$i_n$};
    
    \node at (1,-1.6) {$i_1 + i_2 + \dots + i_n = i$};
\end{tikzpicture}
\caption{Illustration of the fragmentation process without loss of matter, where a $i$-cluster breaks into smaller clusters with respective sizes $i_1, i_2, \dots, i_n$, with the sum of their sizes being equal to that of the original particle.}
\label{Frag1}
\end{figure}

\begin{figure}[h]
\centering
\begin{tikzpicture}
    \draw[fill=gray!20] (-3.2,0.5) circle (0.8);
    \node at (-3.2,0.5) {$i$};
    
    \node at (-2,0.5) {$+$};
    
    \draw[fill=gray!20] (-0.8,0.5) circle (0.8);
    \node at (-0.8,0.5) {$j$};
    
    \draw[-Stealth, thick] (0.3,0.5) -- (1.3,0.5);
    
    \draw[fill=gray!50] (2.5,0.8) circle (0.85);
    \node at (2.5,0.8) {$i + k$};
    
    \node at (3.65,0.5) {$+$};
    
    \draw[fill=gray!30] (4.5,0.2) circle (0.6);
    \node at (4.5,0.2) {$j - k$};
    
\end{tikzpicture}
\caption{Illustration of nonlinear fragmentation process with mass transfer. During the collision, $k$ $1$-clusters are transferred from the incoming $j$-cluster (with $j>k$) to the incoming $i$-cluster, resulting in clusters with respective sizes $i+k$ and $j-k$.}
\label{Frag2}
\end{figure}
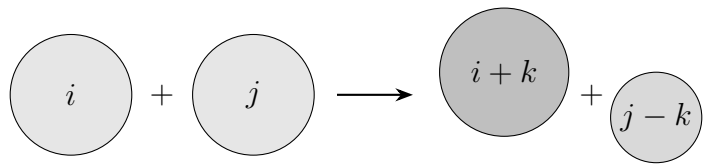

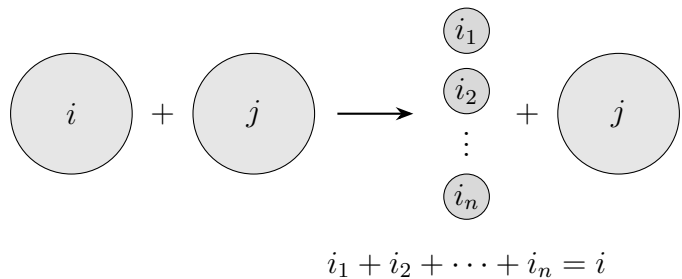
\begin{figure}[h]
    \centering
    \begin{tikzpicture}
        \draw[fill=gray!20] (-3.2,0.5) circle (0.8);
        \node at (-3.2,0.5) {$i$};
        
        \node at (-2,0.5) {$+$};
        
        \draw[fill=gray!20] (-0.8,0.5) circle (0.8);
        \node at (-0.8,0.5) {$j$};
        
        \draw[-Stealth, thick] (0.3,0.5) -- (1.3,0.5);
        
        \draw[fill=gray!30] (2,1.6) circle (0.3);
        \node at (2,1.6) {$i_1$};
        
        \draw[fill=gray!30] (2,0.8) circle (0.3);
        \node at (2,0.8) {$i_2$};
        
        \node at (2,0.2) {$\vdots$};
        
        \draw[fill=gray!30] (2,-0.6) circle (0.3);
        \node at (2,-0.6) {$i_n$};
        
        \node at (2.8,0.5) {$+$};
        
        \draw[fill=gray!20] (4,0.5) circle (0.8);
        \node at (4,0.5) {$j$};
        
        \node at (2,-1.5) {$i_1 + i_2 + \dots + i_n = i$};
        
    \end{tikzpicture}
    \caption{Illustration of nonlinear fragmentation process without mass transfer and without loss of matter. During the collision, a $i$-cluster splits into smaller clusters of sizes $i_1, i_2, \dots, i_n$ such that $i_1 + i_2 + \dots + i_n = i$, while the $j$-cluster remains unchanged.}
    \label{Frag3}
\end{figure}

A widely used approach in the modeling of these processes is based on rate equations, which track the time evolution of cluster size distributions. The first equation of this kind, modeling the coagulation phenomenon, was introduced by the Polish physicist M.~Smoluchowski in his seminal papers \cite{SMOL 1916, SMOL 1917}. The coagulation equation, both with and without linear fragmentation, has been extensively studied over the past few decades, the size variable being either discrete or continuous; for a detailed and thorough review, see \cite{BLL 2019} and the references therein.

In \cite{PhL 2001}, Lauren\c{c}ot and Wrzosek studied the discrete coagulation equation with nonlinear breakage, marking it as the first mathematical study addressing nonlinear breakage. More precisely, denoting by $\psi_i(t)$, $i \ge 1$, the number density of $i$-clusters at time $t \geq 0$, the discrete coagulation equation with nonlinear breakage reads
 \begin{subequations} \label{FNLDCCBE}
\begin{align}
\frac{d\psi_i}{dt}  &=\frac{1}{2} \sum_{j=1}^{i-1} p_{j,i-j} \Gamma_{j,i-j} \psi_j \psi_{i-j}-\sum_{j=1}^{\infty} \Gamma_{i,j} \psi_i \psi_j\nonumber\\
&\quad + \frac{1}{2} \sum_{j=i+1}^{\infty} \sum_{k=1}^{j-1}(1-p_{j-k,k}) \Phi_{j-k,k}^i \Gamma_{j-k,k} \psi_{j-k} \psi_k ,  \hspace{.5cm} i \ge 1, \label{NLDCCBE}\\
\psi_i(0) &= \psi_i^{\rm{in}}, \hspace{.5cm} i \ge 1. \label{NLDCCBEIC}
\end{align}
\end{subequations}
Here, $\Gamma_{i,j}$ denotes the rate of collisions between $i$-clusters and $j$-clusters, while $p_{i,j}$ represents the probability that two colliding clusters with respective sizes $i$ and $j$ merge into a single $i+j$-cluster. The complementary probability, $(1 - p_{i,j})$, corresponds to cluster fragmentation, possibly involving a transfer of matter. The coefficients $(\Gamma_{i,j})$ and $(p_{i,j})$ satisfy the following symmetry property
\begin{equation*}
	0\le p_{i,j} = p_{j,i} \le 1, \qquad \Gamma_{i,j}=\Gamma_{j,i} \geq 0,\qquad i,j\ge 1, 
\end{equation*}
 while  $\{\Phi_{i,j}^s, s=1,2,...,i+j-1\}$  is the size distribution function of the fragments resulting from the collision between a $i$-cluster and a $j$-cluster and satisfies 
\begin{equation}
	\begin{split}
		\Phi_{i,j}^s = \Phi_{j,i}^s & \geq 0, \quad 1\le s \le i+j-1,  \\ 
		\sum_{s=1}^{i+j-1} s \Phi_{i,j}^s & = i+j,
	\end{split} \hspace{1cm} i,j\ge 1. \label{LMC}
\end{equation}
The second identity in~\eqref{LMC} ensures mass conservation during each collisional breakage event, so that conservation of matter is expected throughout time evolution. In terms of the number densities $(\psi_i)_{i\ge 1}$, mass conservation reads
\begin{equation}
	\sum_{i=1}^{\infty} i\psi_i(t) = \sum_{i=1}^{\infty} i\psi_i^{\rm{in}}, \qquad t\ge 0. \label{MCC}
\end{equation}
The first term in~\eqref{NLDCCBE} accounts for the formation of $i$-clusters through coagulation, with a rate determined by the effective coagulation kernel $(p_{i,j}\Gamma_{i,j})$, whereas the second term represents the depletion of $i$-mers due to collisions with other clusters in the system. Finally, the third term describes the creation of $i$-clusters resulting from the collision and subsequent breakup of larger clusters. It is worth noting that the assumption~\eqref{LMC} allows for mass transfer between the colliding clusters, meaning that some of the resulting particles may be larger than either of the incoming particles. In other words, mass transfer between the colliding clusters may occur and the mean size of the system of clusters does not necessarily decrease during the time evolution. Let us also mention here that, when $p_{i,j} = 1$, the equation~\eqref{FNLDCCBE} reduces to the classical Smoluchowski coagulation equation.  In \cite{PhL 2001}, the authors investigate the existence, uniqueness, mass conservation, and long term behavior of weak solutions to~\eqref{FNLDCCBE} under reasonable assumptions on the collision kernel and the daughter distribution function. The study performed in \cite{PhL 2001} also explores the occurrence of the gelation phenomenon; that is, the breakdown of the identity~\eqref{MCC} in finite time. In \cite{AG 24}, the previous work is extended to investigate classical solutions and explore various additional properties. The continuous counterpart of~\eqref{FNLDCCBE} has undergone significant study in recent years; see \cite{PKB 2020, PKB 2020I, AKG 2021}.

In the absence of coagulation ($p_{i,j}=0$), the equation~\eqref{FNLDCCBE} becomes the discrete nonlinear fragmentation equation and reads 
\begin{subequations} \label{FNLDCBE}
\begin{align}
	\frac{d\psi_i}{dt}  &=\frac{1}{2} \sum_{j=i+1}^{\infty} \sum_{k=1}^{j-1} \Phi_{j-k,k}^i \Gamma_{j-k,k} \psi_{j-k} \psi_k -\sum_{j=1}^{\infty} \Gamma_{i,j} \psi_i \psi_j,  \qquad i \ge 1, \label{NLDCBE}\\
	\psi_i(0) &= \psi_i^{\rm{in}}, \qquad i \ge 1. \label{NLDCBEIC}
\end{align}
\end{subequations}
In \cite{AGP 24}, the well-posedness of~\eqref{FNLDCBE} is studied for a broad class of collision kernels and daughter distribution functions which does not exclude mass transfer. In addition, non-trivial stationary solutions are constructed, a result which is closely related to mass exchange during collisions. The continuous version of~\eqref{FNLDCBE} is investigated in \cite{JG 2025}.

A condition on the daughter distribution function $(\Phi_{i,j}^s)$ excluding mass tranfer is provided in \cite{CHNG 90} and reads  
\begin{equation*}
\Phi_{i,j}^s = \textbf{1}_{[s, \infty)} (i) \varphi_{s,i;j} + \textbf{1}_{[s, \infty)} (j) \varphi_{s,j;i} 
\end{equation*}
for $i,j\ge 1$ and $1\le s\le i+j-1$, where $\textbf{1}_{[s, \infty)}$ denotes the characteristic function of the interval $[s,\infty)$. In that case, the system~\eqref{FNLDCBE} reduces to
\begin{subequations} \label{FSNLBE}
\begin{align}
	\frac{d\psi_i}{dt} &= \sum_{j=i+1}^{\infty} \sum_{k=1}^{\infty}   \varphi_{i,j;k}\Gamma_{j,k} \psi_j \psi_k - (1-\delta_{i,1}) \sum_{j=1}^{\infty} \Gamma_{i,j} \psi_i \psi_j, \qquad i\ge 1, \label{SNLBE}\\
	\psi_i(0) &=\psi_{i}^{\rm{in}}, \hspace{.5cm} i\ge 1,\label{SNLBEIC}
\end{align}
\end{subequations}
where $\delta_{1,1}=1$, $\delta_{i,1}=0$, $i\ge 2$, and $\{\varphi_{i,j;k}, 1\leq i \leq j-1\}$ denotes the distribution function of the fragments resulting from the collision of a $j$-cluster with a $k$-cluster and satisfies the conservation of matter
\begin{equation}
	\sum_{i=1}^{j-1} i \varphi_{i,j;k} = j, \hspace{.5cm} j\geq 2,~~~ k\geq 1. \label{LMC1}
\end{equation}
 Since each cluster fragments into smaller clusters after a collision, see Figure~\ref{Frag3}, it is expected that, in the large time, only $1$-clusters will remain.
 
Considerable attention has been given to the study of the continuous version of~\eqref{FSNLBE}, focusing on its analytical solutions. In \cite{CHNG 90}, Cheng and Redner investigate the asymptotic behavior of various classes of models, demonstrating that certain models can be transformed into the linear fragmentation equation through a suitable rescaling of time. This transformation is extensively applied in \cite{EP 2007} to examine the nonlinear fragmentation equation with product collision kernels, addressing the existence and non-existence of solutions, as well as the emergence of finite time singularities. Further insights into the dynamics of the models analyzed in \cite{CHNG 90} are provided by Krapivsky and Ben-Naim in \cite{Krapivsky 2003}. Additionally, the nonlinear fragmentation equation with both product and sum collision kernels is studied by Kostoglou and Karabelas in \cite{Kostoglou 2000}, employing a combination of analytical solutions and asymptotic expansions; see also \cite{Kostoglou 2006}. From a mathematical perspective, the continuous version has been examined in \cite{AKG 2021I, AKG 2024}, with a focus on the analysis of existence, non-existence, and the occurrence of the shattering phenomenon.

As for the discrete setting, the existence of classical solutions to~\eqref{FSNLBE} is established in \cite{AGP 24I} for collision kernels having at most quadratic growth
\begin{equation}
	\Gamma_{i,j} \le A i j, \qquad i,j\ge 1, \label{QG}
\end{equation}
 and a broad class of fragment distribution functions. Various other properties, including uniqueness, differentiability, moment propagation, and large time behavior, are also investigated. Motivated by the recent study \cite{PhL 25}, where the Redner–ben-Avraham–Kahng cluster system, also known as the cluster eating system, is investigated without imposing growth conditions on the kinetic coefficients, the aim of this work is to show that the growth condition~\eqref{QG} can be relaxed and that global mild solutions to~\eqref{FSNLBE} can be constructed assuming only the collision kernel $(\Gamma_{i,j})$ to be non-negative and symmetric; that is,
 \begin{equation}  
 	0\le \Gamma_{i,j} =\Gamma_{j,i}, \qquad i,j\ge 1. \label{ACond}
 \end{equation}
 However, this sole assumption does not seem to be sufficient to derive an existence result for~\eqref{FSNLBE} and, as in \cite{AGP 24I}, it requires to be supplemented with an additional assumption on the fragment distribution function, besides~\eqref{LMC1}: there exist non-negative constants $\alpha_0$ and $\alpha_1$ such that
\begin{equation}
	\varphi_{i,j;k} \leq \alpha_0 + \alpha_1 \varphi_{i,k;j} \quad \text{for all } 1 \leq i \leq j-1, \,\, k \geq j. \label{bCond}
\end{equation}

\begin{remark}\label{RMK2}
	It is important to note that the assumption~\eqref{bCond} holds for a wide range of fragment distribution functions. Indeed, typical examples include bounded fragment distribution functions such as 
	\begin{equation}
		\varphi_{i,j;k} = \frac{i^\nu j}{\displaystyle{\sum_{l=1}^{j-1} l^{1+\nu}}}, \qquad 1 \le i \le j-1, \quad j \ge 2, \quad k \ge 1, \label{FDF:EX1}
	\end{equation}
	for $\nu\ge -1$, which reduces to the uniform distribution 
	\begin{equation*}
		\varphi_{i,j;k} = \frac{2}{j - 1}, \qquad 1 \le i \le j-1, \quad j \ge 2, \quad k \ge 1,
	\end{equation*}
	for $\nu=0$. It also includes unbounded fragment distribution functions like
	\begin{equation*}
		\varphi_{i,j;k} = j \delta_{i,1}, \qquad 1 \le i \le j-1, \quad j \ge 2, \quad k \ge 1,
	\end{equation*}
	which satisfies~\eqref{bCond} with $(\alpha_0,\alpha_1)=(0,1)$, the fragment distribution function~\eqref{FDF:EX1} for $\nu<-1$, which satisfies~\eqref{bCond} with 
	\begin{equation*}
		\alpha_0 = 0, \quad \alpha_1 = \left\{ \begin{array}{ll}
			2^{2+\nu}, & \nu\in [-2,-1), \\
			\displaystyle{\sum_{l=1}^\infty l^{1+\nu}}, & \nu<-2,
		\end{array} \right.
	\end{equation*} 
	and
	\begin{equation}
		\varphi_{i,j;k} = \frac{1}{2^i} \frac{j 2^{j-1}}{2^j - j - 1}, \qquad 1 \le i \le j-1, \quad j \ge 2, \quad k \ge 1, \label{FDF:EX2}
	\end{equation}
	which satisfies~\eqref{bCond} with $\alpha_0=\alpha_1=1$, see \Cref{LEM:EX2} below.
\end{remark}
 
As we shall see below, assumptions~\eqref{LMC1}, \eqref{ACond}, and~\eqref{bCond} allow us to construct mild solutions to~\eqref{FSNLBE} for a large class of initial conditions, including all non-negative sequences having a finite superlinear moment. To be more precise, for $\sigma\ge 0$, let us define the Banach space
\begin{equation*}
	Y_{\sigma} := \left\{ \psi =(\psi_i)_{i\ge 1} :~~~~~ \psi_i \in \mathbb{R}, \ \sum_{i=1}^{\infty} i^{\sigma} |\psi_i| < \infty \right\}
\end{equation*}
with the norm  
\begin{equation*}
	\|\psi\|_{\sigma} := \sum_{i=1}^{\infty} i^{\sigma} |\psi_i|, \qquad \psi\in Y_\sigma,
\end{equation*}
along with its positive cone 
\begin{equation*}
	Y_{\sigma, +} := \left\{ \psi \in Y_{\sigma} : \psi_i \geq 0 \text{ for each } i \ge 1 \right\}.
\end{equation*}
In particular, for a non-negative cluster distribution $\psi$, the norm $\|\psi\|_0$ represents the total number of clusters, while the norm $\|\psi\|_1$ accounts for the total mass of the clusters, so that the conservation of mass~\eqref{MCC} is equivalent to $\|\psi(t)\|_1=\|\psi^{\rm{in}}\|_1$ for all $t\ge 0$. We next introduce the set $\mathcal{G}_1$ of non-negative and convex functions $G \in C^2([0,\infty))$ such that $G(0)=G'(0)=0$ and $G'$ is a concave function, as well as 
\begin{equation*}
	\mathcal{G}_{1,\infty} := \left\{ G\in \mathcal{G}_1\ :\ \lim_{\zeta\to\infty} \frac{\zeta G'(\zeta) - G(\zeta)}{\zeta} = \infty \right\}.
\end{equation*}
It readily follows from the definition of $\mathcal{G}_{1, \infty}$ and l'Hospital rule that any $G\in\mathcal{G}_{1, \infty}$ satisfies
\begin{equation}
	\lim_{\zeta \to \infty} G'(\zeta) = \lim_{\zeta\to \infty} \frac{G(\zeta)}{\zeta} = \infty. \label{G0:Cond0}
\end{equation} 

\begin{remark} \label{RMK3}
	One can check that the functions $z\mapsto z^m$ with $m\in (1,2]$ and $z\mapsto z \big[\ln\big(e^{m-1}+z\big)\big]^m$ with $m>1$ belong to $\mathcal{G}_{1,\infty}$, but not $z\mapsto \ln(1+z)$ (though it belongs to $\mathcal{G}_1$ and satisfies~\eqref{G0:Cond0}).
\end{remark}
 
With this notation, we may summarize the main contribution of this work as follows: given a collision kernel satisfying the symmetry and non-negativity condition~\eqref{ACond} and a fragment distribution function satisfying~\eqref{LMC1} and~\eqref{bCond}, we construct a global mild solution to~\eqref{FSNLBE} for any initial condition $\psi^{\rm{in}} \in Y_{1,+}$ such that
\begin{equation*}
	\sum_{i=1}^\infty G_0(i) \psi_i^{\rm{in}}< \infty
\end{equation*}
for some function $G_0\in \mathcal{G}_{1,\infty}$. In particular, the price to pay for having no growth condition on the collision kernel is that we cannot handle arbitrary initial data in $Y_{1,+}$. Still, the existence result applies to any initial condition $\psi^{\rm{in}}$ which belongs to $Y_{\sigma,+}$ for some $\sigma>1$.

Before presenting the existence result, we first recall the definition of a global mild solution to~\eqref{FSNLBE} in $Y_{1,+}$.

\begin{definition} \label{DEF}
Consider $\psi^{\rm{in}} \in  Y_{1,+}$. A global mild solution $\psi=(\psi_i)_{i\ge 1}$ to~\eqref{FSNLBE} is a sequence of non-negative functions in $L^{\infty}((0,\infty),Y_{1,+})$ satisfying, for each $i\ge 1$ and $t>0$,
\begin{itemize}
\item[(i)] $\psi_i\in C([0,\infty))$;
\item[(ii)] 
\begin{equation*}
	 \sum_{j=i+1}^{\infty} \sum_{k=1}^{\infty} \varphi_{i,j;k} \Gamma_{j,k} \psi_j \psi_k \in L^1((0,t)), \qquad  \sum_{j=1}^{\infty} \Gamma_{i,j} \psi_i \psi_j \in L^1((0,t));
\end{equation*}
\item[(iii)] 
\begin{equation}
\begin{split}
	\psi_i(t) & = \psi_i^{\rm{in}} + \int_0^t \sum_{j=i+1}^{\infty} \sum_{k=1}^{\infty} \varphi_{i,j;k} \Gamma_{j,k} \psi_j(s) \psi_k(s) ds \\
	& \qquad - (1-\delta_{i,1}) \int_0^t \sum_{j=1}^{\infty} \Gamma_{i,j} \psi_i(s) \psi_j(s) ds. 
\end{split} \label{IF}
\end{equation}
\end{itemize}
\end{definition}

 \begin{theorem} \label{TH1}
Assume that the kinetic coefficients $(\Gamma_{i,j})$  and $(\varphi_{i,j;k})$ satisfy the assumptions~\eqref{ACond}, \eqref{LMC1}, and~\eqref{bCond}. Consider $\psi^{\rm{in}}\in Y_{1,+}$ such that 
\begin{equation}
		\mathcal{J}_0 := \sum_{i=1}^{\infty} G_0(i) \psi_i^{\rm{in}} < \infty \label{G0:Cond1}
\end{equation}
for some $G_0\in \mathcal{G}_{1,\infty}$. Then there exists at least one global mild solution $\psi$ to~\eqref{FSNLBE} in the sense of Definition~\ref{DEF} which additionally satisfies the mass conservation~\eqref{MCC}
\begin{equation*}
	\|\psi(t)\|_1 = \|\psi^{\rm{in}}\|_1, \qquad t\ge 0.
\end{equation*}.

Moreover, for any non-negative sequence $(\Lambda_i)_{i\ge 1}$ such that the sequence $(\Lambda_i/i)_{i\ge 1}$ is non-decreasing and 
\begin{equation}
	M_{\Lambda}(\psi^{\mathrm{in}}) := \sum_{i = 1}^\infty \Lambda_i \psi_i^{\mathrm{in}} < \infty, \label{LambdaMom}
\end{equation}
then the mild solution $\psi = (\psi_i)_{i \ge 1}$ constructed above satisfies the uniform-in-time tail estimate
\begin{equation}
	\sum_{i = r}^\infty \Lambda_i \psi_i(t) \le \sum_{i = r}^\infty \Lambda_i \psi_i^{\mathrm{in}} \quad \text{for all } t > 0 \;\text{ and }\; r \ge 1. \label{TailIneq}
\end{equation}
\end{theorem}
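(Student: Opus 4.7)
The plan is to combine a finite-size truncation with a compactness argument in the spirit of~\cite{AGP 24I, PhL 25}, letting the dissipation of the convex moment~$\mathcal{J}_0$ compensate for the absence of growth conditions on $\Gamma$. I first introduce the truncated system obtained by capping every summation in~\eqref{SNLBE} at~$N$. Its right-hand side is polynomial in $\psi^N = (\psi_1^N,\ldots,\psi_N^N)$, so Cauchy--Lipschitz produces a local $C^1$-solution; quasi-positivity of the right-hand side gives $\psi_i^N(t)\ge 0$, and testing against $(i)_{i=1}^N$ together with~\eqref{LMC1} gives $\frac{d}{dt}\sum_{i=1}^N i\psi_i^N=0$. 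Thus the truncated solution is globally defined on $[0,\infty)$ and satisfies $\sum_{i=1}^N i\psi_i^N(t)\le\|\psi^{\mathrm{in}}\|_1$.

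Next I would establish a key monotonicity of weighted moments. For any non-negative sequence $(w_i)$ such that $i\mapsto w_i/i$ is non-decreasing, a rearrangement of indices using~\eqref{LMC1} yields
\begin{equation*}
\frac{d}{dt} \sum_{i=1}^N w_i \psi_i^N = \sum_{j=2}^N \sum_{k=1}^N \Gamma_{j,k} \psi_j^N \psi_k^N \left[\sum_{i=1}^{j-1} w_i \varphi_{i,j;k} - w_j\right] \le 0,
\end{equation*}
since $\sum_{i=1}^{j-1} w_i \varphi_{i,j;k}\le (w_j/j)\sum_i i\varphi_{i,j;k}=w_j$. Convexity of $G_0$ with $G_0(0)=G_0'(0)=0$ makes $G_0(i)/i$ non-decreasing, so the above applies to $w_i=G_0(i)$, giving the uniform bound $\sum_i G_0(i)\psi_i^N(t)\le\mathcal{J}_0$ for all $(t,N)$; it also applies to $w_i=\Lambda_i$, producing $\sum_{i\ge 1}\Lambda_i\psi_i^N(t)\le M_\Lambda(\psi^{\mathrm{in}})$. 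Since $G_0\in\mathcal{G}_{1,\infty}$ forces $G_0(i)/i\to\infty$ via~\eqref{G0:Cond0}, a de~la Vall\'ee--Poussin-type estimate $\sum_{i\ge R} i\psi_i^N(t)\le (R/G_0(R))\,\mathcal{J}_0$ follows, ensuring that $\{\psi^N(t)\}$ is tight in $Y_1$ uniformly in $(N,t)$.

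The third and most delicate step is the compactness argument. Each $\psi_i^N$ satisfies the pointwise bound $\psi_i^N(t)\le\|\psi^{\mathrm{in}}\|_1/i$, so only equicontinuity in $t$ is needed. Writing the truncated equation as $\dot\psi_i^N=Q_i^{+,N}-(1-\delta_{i,1})Q_i^{-,N}$, with $Q_i^{+,N}=\sum_{j>i,\,k\le N}\varphi_{i,j;k}\Gamma_{j,k}\psi_j^N\psi_k^N$ and $Q_i^{-,N}=\psi_i^N\sum_{j=1}^N\Gamma_{i,j}\psi_j^N$, the identity $\int_0^T[Q_i^{+,N}-(1-\delta_{i,1})Q_i^{-,N}]\,d\tau=\psi_i^N(T)-\psi_i^{\mathrm{in}}$ reduces the problem to a uniform-in-$N$ bound on $\int_0^T Q_i^{+,N}\,d\tau$. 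I would derive it from the integrated $G_0$-dissipation
\begin{equation*}
\int_0^T \sum_{j\ge 2}\sum_{k\ge 1}\Gamma_{j,k}\psi_j^N\psi_k^N \left[G_0(j)-\sum_{l=1}^{j-1}G_0(l)\varphi_{l,j;k}\right]d\tau \le \mathcal{J}_0
\end{equation*}
together with assumption~\eqref{bCond}, which is precisely tailored to trade a factor $\varphi_{i,j;k}$ with $k\ge j$ for $\alpha_0+\alpha_1\varphi_{i,k;j}$ and thereby re-index the gain so that the dissipation is always carried by the larger of the two colliding particles. A diagonal Arzel\`a--Ascoli extraction then produces a subsequence with $\psi_i^{N_\ell}\to\psi_i$ in $C([0,T])$ for each $i$ and $T$.

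Finally one passes to the limit in the integral formulation~\eqref{IF}. For $R>i$, the partial sums over $\{j,k<R\}$ converge by the $C([0,T])$ convergence and dominated convergence, while the tails $\{j\ge R\}\cup\{k\ge R\}$ are uniformly small thanks to the $G_0$-tail estimate, which simultaneously establishes the integrability required by Definition~\ref{DEF}(ii). Mass conservation $\|\psi(t)\|_1=\|\psi^{\mathrm{in}}\|_1$ follows by combining Fatou (giving $\le$) with the uniform tightness (excluding mass escape to infinity), and the tail inequality~\eqref{TailIneq} is inherited from its truncated counterpart by Fatou. The main obstacle is clearly the equicontinuity step: without any growth hypothesis on~$\Gamma$ the loss term $\psi_i^N\sum_j\Gamma_{i,j}\psi_j^N$ has no pointwise bound, and overcoming this is exactly what~\eqref{bCond} together with the $G_0$-dissipation is designed for.
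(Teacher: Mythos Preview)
Your overall strategy matches the paper's: truncate, exploit the monotonicity of $(w_i/i)$-weighted moments, extract a $G_0$-dissipation inequality, obtain compactness, and pass to the limit using tail control. Two points deserve correction, however.

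\medskip

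\textbf{The compactness step.} You claim that a uniform bound on $\int_0^T Q_i^{+,N}\,d\tau$ (hence on $\int_0^T|\dot\psi_i^N|\,d\tau$) yields equicontinuity and allows Arzel\`a--Ascoli, giving convergence in $C([0,T])$. This does not follow: a uniform $L^1$ bound on the derivatives gives only a uniform total-variation bound, not equicontinuity, since there is no pointwise or uniform-integrability control on $Q_i^{\pm,N}(t)$. The paper accordingly invokes Helly's selection principle rather than Arzel\`a--Ascoli, extracting merely \emph{pointwise} convergence $\psi_i^{N_\ell}(t)\to\psi_i(t)$ for each $t\ge 0$. This is sufficient: the passage to the limit in the integrated equation~\eqref{IF} works under pointwise convergence together with the uniform bound~\eqref{TMC} and Lebesgue dominated convergence in $t$, and the continuity $\psi_i\in C([0,\infty))$ is recovered \emph{a posteriori} from~\eqref{IF} once the right-hand side is shown to lie in $L^1_{\mathrm{loc}}$.

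\medskip

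\textbf{The role of~\eqref{bCond}.} You locate~\eqref{bCond} in the derivation of the bound $\int_0^T Q_i^{+,N}\,d\tau\le C_i$, but in fact that bound follows from the $G_0$-dissipation alone, via the concavity of $G_1(\zeta)=G_0(\zeta)/\zeta$: restricting the dissipation sum to the single fragment index $l=i$ and using $G_1(j)-G_1(i)\ge G_1(i+1)-G_1(i)\ge G_1'(i+1)>0$ for $j>i$ already gives a bound independent of~$N$ (this is the paper's Lemma~\ref{LEM3}). Assumption~\eqref{bCond} is instead needed precisely where you later write ``the tails $\{j\ge R\}\cup\{k\ge R\}$ are uniformly small'': the delicate tail is $\sum_{i<j\le m}\sum_{k>m}\varphi_{i,j;k}\Gamma_{j,k}\psi_j\psi_k$, in which the large index is $k$ but the daughter distribution is attached to $j$, so the dissipation cannot be read off directly. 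The swap $\varphi_{i,j;k}\le\alpha_0+\alpha_1\varphi_{i,k;j}$ is exactly what transfers the daughter distribution to the large particle and lets the dissipation control this term (Proposition~\ref{Prop1} in the paper).
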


We next identify an additional simple structure condition on the kinetic coefficients $(\Gamma_{i,j})_{i,j \ge 1}$ which, along with the boundedness of the fragment distribution function, allows us to show that the mild solution to~\eqref{SNLBE} constructed in \Cref{TH1} is actually a classical solution.

 \begin{theorem} \label{TH2}
 	Assume that there is a non-negative sequence $(\Lambda_i)_{i\ge 1}$ with $\Lambda_1\ge 1$ such that the sequence $(\Lambda_i/i)_{i\ge 1}$ is non-decreasing and the kinetic coefficients $(\Gamma_{i,j})$ satisfies
 	\begin{equation}
 		0 \le \Gamma_{i,j} = \Gamma_{j,i} \le \Lambda_i \Lambda_j, \qquad i,j\ge 1. \label{ACond1}
 	\end{equation}
 	Assume also that $(\varphi_{i,j;k})$ satisfy the assumptions~\eqref{LMC1} and~\eqref{bCond} with $\alpha_1=0$; that is,
 	\begin{equation}
 		\varphi_{i,j;k} \leq \alpha_0, \qquad 1 \leq i \leq j-1, \quad j \geq 2, \quad k \geq 1. \label{bCond3}
 	\end{equation}
 	Finally, assume that the initial condition $\psi^{\rm{in}}\in Y_{1,+}$ satisfies~\eqref{G0:Cond1} for some function $G_0\in\mathcal{G}_{1,\infty}$, as well as~\eqref{LambdaMom}. Then there is at least one mass conserving classical solution $\psi = (\psi_i)_{i \ge 1}$ to~\eqref{FSNLBE}; that is, for each $i\ge 1$, $\psi_i \in C^1([0,\infty))$,
	\begin{equation}
		(1-\delta_{i,1}) \sum_{j=1}^{\infty} \Gamma_{i,j} \psi_{i} \psi_j \in C([0,\infty)), \qquad \sum_{j=i+1}^{\infty} \sum_{k=1}^{\infty} \varphi_{i,j;k}\Gamma_{j,k} \psi_j \psi_k \in C([0,\infty)),\label{CNT}
\end{equation}
and~\eqref{SNLBE} is satisfied pointwise for all $i \ge 1$. Furthermore, $\psi$ satisfies 
\begin{equation}
	M_{\Lambda}(\psi(t)) := \sum_{i=1}^{\infty}\Lambda_i\psi_i(t) \le M_{\Lambda}(\psi^{\rm{in}}), \qquad t\ge 0. \label{HMB}
\end{equation}
\end{theorem}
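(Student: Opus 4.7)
The plan is to build directly on Theorem~\ref{TH1} and only upgrade the regularity of the solution it produces. Since $\psi^{\mathrm{in}}$ satisfies~\eqref{G0:Cond1} and~\eqref{LambdaMom}, Theorem~\ref{TH1} yields a mass-conserving mild solution $\psi$ in the sense of Definition~\ref{DEF}, together with the tail estimate~\eqref{TailIneq}. Choosing $r = 1$ in~\eqref{TailIneq} immediately gives the moment bound~\eqref{HMB}:
\begin{equation*}
M_{\Lambda}(\psi(t)) = \sum_{i=1}^\infty \Lambda_i \psi_i(t) \le \sum_{i=1}^\infty \Lambda_i \psi_i^{\mathrm{in}} = M_{\Lambda}(\psi^{\mathrm{in}}), \qquad t \ge 0,
\end{equation*}
so the bulk of the work is to show that the two series appearing in~\eqref{CNT} are continuous functions of $t$, since once this is done the integral formulation~\eqref{IF} together with the fundamental theorem of calculus will give $\psi_i \in C^1([0,\infty))$ and the pointwise identity~\eqref{SNLBE}.

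The strategy for continuity is a uniform-in-time tail bound based on~\eqref{ACond1} and~\eqref{bCond3}. For the loss term, the assumption $\Gamma_{i,j} \le \Lambda_i \Lambda_j$ together with~\eqref{HMB} gives $\sum_{j=1}^\infty \Gamma_{i,j} \psi_j(t) \le \Lambda_i M_{\Lambda}(\psi^{\mathrm{in}})$, and for each $N \ge 1$
\begin{equation*}
\sum_{j = N+1}^\infty \Gamma_{i,j} \psi_i(t) \psi_j(t) \le \Lambda_i \psi_i(t) \sum_{j = N+1}^\infty \Lambda_j \psi_j(t) \le \Lambda_i \|\psi_i\|_\infty \sum_{j = N+1}^\infty \Lambda_j \psi_j^{\mathrm{in}},
\end{equation*}
where the final estimate uses~\eqref{TailIneq}. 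As each $\psi_j$ belongs to $C([0,\infty))$ by Definition~\ref{DEF}(i), the partial sums are continuous, and the tail bound shows they converge uniformly in $t$; hence the loss series is continuous. For the gain term I would split the remainder symmetrically: combining $\varphi_{i,j;k} \le \alpha_0$ from~\eqref{bCond3} with $\Gamma_{j,k} \le \Lambda_j \Lambda_k$ yields
\begin{equation*}
\sum_{\substack{j \ge i+1,\, k \ge 1 \\ \max(j,k) > N}} \varphi_{i,j;k} \Gamma_{j,k} \psi_j(t) \psi_k(t) \le 2\alpha_0 M_{\Lambda}(\psi^{\mathrm{in}}) \sum_{l = N+1}^\infty \Lambda_l \psi_l^{\mathrm{in}},
\end{equation*}
which again tends to zero uniformly in $t$ as $N \to \infty$, yielding continuity of the gain series.

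With both series in~\eqref{CNT} continuous on $[0,\infty)$, the integrands in the mild formulation~\eqref{IF} are continuous, so differentiation under the integral sign gives $\psi_i \in C^1([0,\infty))$ and the pointwise equation~\eqref{SNLBE}. I do not expect a serious obstacle: the tail estimate~\eqref{TailIneq} of Theorem~\ref{TH1} already supplies the crucial uniform-in-time control that is exactly what is needed to dominate the two infinite series. The only delicate point is the symmetric splitting of the double gain sum, where the hypothesis $\alpha_1 = 0$ in~\eqref{bCond3} is what allows the clean bound by $\alpha_0 M_\Lambda(\psi)^2$; under the more general~\eqref{bCond} one would have to invoke a symmetrization in the $(j,k)$ variables, which is precisely what the present stronger assumption avoids.
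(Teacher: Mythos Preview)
Your proposal is correct and follows essentially the same route as the paper's proof: invoke Theorem~\ref{TH1} for the mild solution and the tail estimate~\eqref{TailIneq}, then use~\eqref{ACond1} and~\eqref{bCond3} to bound the tails of the loss and gain series uniformly in time, giving continuity, and finally read off $C^1$-regularity from~\eqref{IF} and~\eqref{HMB} from~\eqref{TailIneq} with $r=1$. The only cosmetic difference is that the paper estimates $|f(t)-f(s)|$ directly and takes $\limsup_{s\to t}$, whereas you phrase the same argument as uniform convergence of continuous partial sums; also, where you write $\|\psi_i\|_\infty$, the paper uses the explicit bound $\psi_i(t)\le\|\psi^{\mathrm{in}}\|_1$ coming from mass conservation.
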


The significant role of the structural assumption~\eqref{ACond1} in the existence theory for coagulation-fragmentation equations is already observed in \cite{PhL 1999, PhL 25}. Taking $\Lambda_i=\sqrt{A} i$, $i\ge 1$, the collision rate satisfies~\eqref{QG} and we recover the existence result established in \cite[Theorem~2.1]{AGP 24I} by a different approach, but for a smaller set of initial conditions.

We supplement \Cref{TH2} with a uniqueness result under the assumption of the finiteness of a higher moment of the initial condition. The proof relies on an estimate on the difference of two solutions in a suitably chosen weighted space, a classical approach to uniqueness for coagulation-fragmentation equations, see \cite[Section~8.2.5]{BLL 2019} and the references therein.
 
\begin{theorem} \label{TH3}
Assume that the hypotheses of Theorem \ref{TH2} hold, and let $\psi^{\mathrm{in}} \in Y_{1,+}$ satisfy
\begin{equation}
M_{\Lambda^2}(\psi^{\mathrm{in}}) := \sum_{i=1}^{\infty} \Lambda_i^2 \psi_i^{\mathrm{in}} < \infty. \label{UNIC}
\end{equation}
Then there exists a unique classical solution $\psi = (\psi_i)_{i \ge 1}$ to~\eqref{FSNLBE} such that
\begin{equation}
M_{\Lambda^2}(\psi(t)) := \sum_{i=1}^\infty \Lambda_i^2 \psi_i(t) \le M_{\Lambda^2}(\psi^{\mathrm{in}}), \quad \text{for all } t \ge 0. \label{UNME}
\end{equation}
\end{theorem}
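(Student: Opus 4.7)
The plan is to prove uniqueness in two steps: first propagate the $\Lambda^2$-moment bound~\eqref{UNME} through the approximation scheme used in the proof of \Cref{TH2} to obtain existence of a classical solution satisfying~\eqref{UNME}, and then derive uniqueness via a weighted $\ell^1$-estimate on the difference of two such solutions.

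For the moment propagation, the key observation is that, since $(\Lambda_i/i)_{i\ge 1}$ is non-decreasing, we have $\Lambda_i^2 \le (\Lambda_j^2/j)\,i$ for $1 \le i \le j - 1$, whence by~\eqref{LMC1}
\begin{equation*}
	\sum_{i=1}^{j-1} \Lambda_i^2 \varphi_{i,j;k} \le \frac{\Lambda_j^2}{j} \sum_{i=1}^{j-1} i \varphi_{i,j;k} = \Lambda_j^2.
\end{equation*}
Upon computing $\frac{d}{dt}M_{\Lambda^2}(\psi^{(n)})$ along the truncated approximation employed for \Cref{TH2}, this bound produces an exact cancellation between the gain and loss terms (after interchanging and renaming summation indices), yielding $M_{\Lambda^2}(\psi^{(n)}(t)) \le M_{\Lambda^2}(\psi^{\mathrm{in}})$. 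Passing to the limit by Fatou's lemma will then give~\eqref{UNME} for the limit classical solution.

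For uniqueness, I will consider two classical solutions $\psi$ and $\tilde\psi$ satisfying~\eqref{UNME}, set $E_i := \psi_i - \tilde\psi_i$, and control $W(t) := \sum_{i\ge 1} \Lambda_i |E_i(t)|$. Since each $E_i$ is $C^1$, $|E_i|$ is Lipschitz with $\tfrac{d}{dt}|E_i| = \sgn(E_i)\tfrac{dE_i}{dt}$ almost everywhere. Expanding the bilinear products as $\psi_j\psi_k - \tilde\psi_j\tilde\psi_k = E_j\psi_k + \tilde\psi_j E_k$ and $\psi_i\psi_j - \tilde\psi_i\tilde\psi_j = E_i\psi_j + \tilde\psi_i E_j$, the expression for $dW/dt$ splits into four contributions. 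The crucial step will be that the gain contribution from $E_j\psi_k$, after being bounded in absolute value via $\sum_{i=1}^{j-1}\Lambda_i\varphi_{i,j;k} \le \Lambda_j$, exactly cancels --- upon relabeling $(i,j)\mapsto(j,k)$ --- with the (sign-definite) loss contribution from $E_i\psi_j$, owing to $\sgn(E_i)E_i = |E_i|$. The two remaining contributions, which feature $\tilde\psi$, will be controlled via~\eqref{ACond1} by
\begin{equation*}
	2 \sum_{j,k \ge 1} \Lambda_j^2 \Lambda_k\, \tilde\psi_j |E_k| = 2\, M_{\Lambda^2}(\tilde\psi(t))\, W(t) \le 2\, M_{\Lambda^2}(\psi^{\mathrm{in}})\, W(t),
\end{equation*}
the last inequality being~\eqref{UNME} applied to $\tilde\psi$. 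Gronwall's lemma together with $W(0) = 0$ then yields $W \equiv 0$, hence $\psi = \tilde\psi$.

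The main technical obstacle will lie in rigorously justifying the termwise differentiation of the series defining $W$ and the interchange of summations that brings out the cancellation. Both points should be handled using the absolute convergence of the relevant double series, itself guaranteed by the uniform-in-time $M_{\Lambda^2}$-bound established in the first step.
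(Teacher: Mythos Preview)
Your proposal is correct and follows essentially the same route as the paper. For existence, the paper observes that $(\Lambda_i^2/i)_{i\ge 1}$ is again non-decreasing and that $\Gamma_{i,j}\le\Lambda_i^2\Lambda_j^2$, so it simply re-applies \Cref{TH2} with $\Lambda_i$ replaced by $\Lambda_i^2$ to obtain~\eqref{UNME}; your direct propagation of $M_{\Lambda^2}$ along the truncated system is the same computation unwrapped (note it is an inequality, not an exact cancellation, but this gives the desired monotonicity). For uniqueness, the paper performs exactly your weighted $\ell^1$ argument: it writes $H=\psi-\phi$, uses the same bilinear splitting, the bound $\sum_{i=1}^{j-1}\Lambda_i\varphi_{i,j;k}\le\Lambda_j$, the same cancellation of the $\psi_k|H_j|$ terms, and concludes with $\tfrac{d}{dt}\sum_i\Lambda_i|H_i|\le 2M_{\Lambda^2}(\psi^{\rm in})\sum_i\Lambda_i|H_i|$ and Gronwall.
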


\medskip

The paper is organized as follows. \Cref{SEC2} is dedicated to establishing the proof of \Cref{TH1}, employing a compactness method and the approximation of the system~\eqref{FSNLBE} by finite systems of ordinary differential equations. \Cref{SEC3} is devoted to establishing the existence of classical solutions, which is derived from \Cref{TH1} and appropriate moment estimates. The uniqueness of classical solutions is then dealt with in \Cref{SEC4}. In \Cref{SEC5}, we investigate the asymptotic behavior of solutions. Finally, \Cref{SEC6} presents the results of numerical simulations.

\section{\textbf{Existence of Mild Solutions}} \label{SEC2}
 
We fix $\psi^{\rm{in}}\in Y_{1,+}$ satisfying~\eqref{G0:Cond1} for some $G_0\in\mathcal{G}_{1,\infty}$. We then define 
\begin{align*}
	G_1(\zeta) =\frac{G_0(\zeta)}{\zeta}, \quad \zeta>0, \qquad G_1(0) =0,
\end{align*}
and recall that the properties of $G_0$ implies that $G_1$ is non-negative, increasing, and concave. In addition,
\begin{equation}
	\lim_{\zeta\to \infty} \zeta G_1'(\zeta) = \lim_{\zeta\to \infty} \frac{\zeta G_0'(\zeta) - G_0(\zeta)}{\zeta} = \infty,  \label{G0:Cond2}
\end{equation}
since $G_0\in\mathcal{G}_{1,\infty}$.

 For $p\ge 3$, let us next introduce the truncated version of~\eqref{FSNLBE} as
 \begin{subequations} \label{FTSNLBE}
 \begin{equation} 
 \frac{d\psi_i^p}{dt} = \sum_{j=i+1}^{p} \sum_{k=1}^{p}  \Gamma_{j,k} \varphi_{i,j;k} \psi_j^p \psi_k^p - (1-\delta_{i,1}) \sum_{j=1}^{p} \Gamma_{i,j} \psi_i^p \psi_j^p, \label{TSNLBE}
 \end{equation}
 \begin{equation}
 \psi_i^p(0) = \psi_i^{\rm{in}},\label{TSNLBEIC}
 \end{equation}
 \end{subequations}
where  $ i\in \{1,2,\cdots,p\}$. We first report the well-posedness of~\eqref{FTSNLBE} which is a classical consequence of the Cauchy-Lipschitz (or Picard-Lindel\"of) theorem, together with the mass conservation~\eqref{TMC}.

\begin{lemma} \label{LEM1}
Let $p \geq 3$. There is a unique solution 
$\psi^p = (\psi^p_i)_{1 \leq i \leq p} \in C^1\big([0,\infty), [0,\infty)^p\big)$ to~\eqref{FTSNLBE}. For any sequence of non-negative real numbers $(\upsilon_i)_{i \geq 1}$, there holds
\begin{equation}
	\frac{d}{dt} \sum_{i=1}^{p} \upsilon_i \psi^p_i + \sum_{j=2}^{p} \sum_{k=1}^{p} \Big(\upsilon_j - \sum_{i=1}^{j-1} \upsilon_i \varphi_{i,j;k}\Big) \Gamma_{j,k} \psi^p_j \psi^p_k = 0. \label{TR:WF}
\end{equation}
In particular,
\begin{equation}
	\sum_{i=1}^p i \psi_i^p(t) = \sum_{i=1}^p i \psi_i^{\rm{in}} \le \|\psi^{\rm{in}}\|_1, \qquad t\in [0,\infty). \label{TMC}
\end{equation}
\end{lemma}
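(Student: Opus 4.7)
The plan is to treat \eqref{FTSNLBE} as a finite-dimensional ODE system and extract the three conclusions from a single summation identity.

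First, observe that the right-hand side of \eqref{TSNLBE} is a quadratic polynomial in the components of $\psi^p \in \mathbb{R}^p$, hence $C^\infty$ and in particular locally Lipschitz. The Cauchy--Lipschitz (Picard--Lindel\"of) theorem therefore yields a unique maximal solution $\psi^p \in C^1([0,T_p),\mathbb{R}^p)$ for some $T_p \in (0,\infty]$. To keep $\psi^p$ in the non-negative cone $[0,\infty)^p$, I would invoke a quasi-positivity (tangent cone) argument: for each $i\in\{1,\dots,p\}$, the only negative contribution to $d\psi_i^p/dt$ in \eqref{TSNLBE} is the loss term, which carries the factor $\psi_i^p$. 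Consequently, if $\psi_i^p=0$ while all other components remain non-negative, then $d\psi_i^p/dt \ge 0$, and a standard comparison argument (or Gronwall on $e^{-Lt}\psi_i^p$ for $L$ a local Lipschitz bound on the loss coefficient) keeps $\psi^p$ in $[0,\infty)^p$ on $[0,T_p)$.

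Next I would derive~\eqref{TR:WF} by direct computation. Multiplying \eqref{TSNLBE} by $\upsilon_i$ and summing over $i\in\{1,\dots,p\}$, Fubini on the finite gain double sum gives
\begin{equation*}
	\sum_{i=1}^{p}\upsilon_i\sum_{j=i+1}^{p}\sum_{k=1}^{p}\Gamma_{j,k}\varphi_{i,j;k}\psi_j^p\psi_k^p
	= \sum_{j=2}^{p}\sum_{k=1}^{p}\Big(\sum_{i=1}^{j-1}\upsilon_i\varphi_{i,j;k}\Big)\Gamma_{j,k}\psi_j^p\psi_k^p,
\end{equation*}
while the loss term rewritten with the index renaming $i \leftrightarrow j$ becomes $\sum_{j=2}^{p}\sum_{k=1}^{p}\upsilon_j\,\Gamma_{j,k}\psi_j^p\psi_k^p$ (the $j=1$ summand is killed by $1-\delta_{i,1}$). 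Subtracting produces~\eqref{TR:WF}. Specializing to $\upsilon_i = i$ and using~\eqref{LMC1}, the bracket $\upsilon_j - \sum_{i=1}^{j-1}i\varphi_{i,j;k} = j - j = 0$ for each pair $(j,k)$ with $j\ge 2$, so $\sum_{i=1}^p i\psi_i^p$ is conserved, proving the equality in~\eqref{TMC}, while the inequality is immediate from $\psi_i^{\rm in}\ge 0$ and $\psi^{\rm in}\in Y_{1,+}$.

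Finally, I would use mass conservation for globality: since $\psi_i^p(t)\ge 0$ and $\sum_{i=1}^p i\psi_i^p(t) \le \|\psi^{\rm in}\|_1$ on $[0,T_p)$, each component is bounded by $\|\psi^{\rm in}\|_1$, so the solution cannot blow up in finite time and the standard continuation argument gives $T_p=\infty$. The only step requiring genuine attention is the quasi-positivity check, because the mass-conservation bound (and hence globality) depends on having non-negativity first; everything else is bookkeeping on finite sums, which is why the lemma is indeed a routine consequence of Picard--Lindel\"of together with~\eqref{LMC1}.
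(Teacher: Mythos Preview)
Your proposal is correct and follows essentially the same approach as the paper: Cauchy--Lipschitz for local well-posedness, quasi-positivity for non-negativity, a direct summation/Fubini computation for~\eqref{TR:WF}, the choice $\upsilon_i=i$ together with~\eqref{LMC1} for~\eqref{TMC}, and mass conservation to rule out finite-time blowup. The paper's proof is simply more terse, summarizing the derivation of~\eqref{TR:WF} as ``a simple computation'' and the non-negativity as a consequence of quasi-positivity, while you have written out these steps explicitly.
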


\begin{proof}
	As already mentioned, the local well-posedness of~\eqref{FTSNLBE} and the componentwise non-negativity of the solution are straightforward consequences of the Cauchy-Lipschitz theorem since the right-hand side of~\eqref{FTSNLBE} is locally Lipschitz continuous and quasi-positive in $\mathbb{R}^p$. Next, a simple computation leads to the identity~\eqref{TR:WF} which, in turn, gives the mass conservation~\eqref{TMC} (with the choice $\upsilon_i=i$, $i\ge 1$) and thereby excludes finite time blowup of the solution.
\end{proof} 

Exploiting~\eqref{TR:WF} for a particular class of sequences $(\upsilon_i)_{i\ge 1}$ provides addition information on $\psi^p$.

\begin{lemma}\label{LEM1B}
Let $p\ge 3$ and consider a sequence of non-negative real numbers $(\upsilon_i)_{i \geq 1}$ such that $(\upsilon_i/i)_{i \geq 1}$ is non-decreasing. Then, for $t\ge 0$, 
\begin{subequations}
\begin{equation}
    0 \leq \sum_{i=1}^{p} \upsilon_i \psi^p_i (t) \leq \sum_{i=1}^{p} \upsilon_i \psi^{\rm{in}}_i\label{LEM1:EST1},
\end{equation}
\begin{equation}
    0 \leq \int_0^t \sum_{j=2}^{p} \sum_{k=1}^{p} \Bigg(\upsilon_j - \sum_{i=1}^{j-1}\upsilon_i \varphi_{i,j;k}\Bigg) \Gamma_{j,k} \psi^p_j (s) \psi^p_k (s) \, ds \leq \sum_{i=1}^{p} \upsilon_i \psi^{\rm{in}}_i. \label{LEM1:EST2}
\end{equation}
\end{subequations}
 \end{lemma}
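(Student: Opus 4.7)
The plan is to exploit the identity~\eqref{TR:WF} by showing that, under the monotonicity hypothesis on $(\upsilon_i/i)_{i\ge 1}$, the coefficient multiplying $\Gamma_{j,k} \psi_j^p \psi_k^p$ in~\eqref{TR:WF} is non-negative, so that $t \mapsto \sum_{i=1}^p \upsilon_i \psi_i^p(t)$ is non-increasing and both estimates follow by integration.

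The key step is the pointwise bound
\begin{equation*}
    \sum_{i=1}^{j-1} \upsilon_i \varphi_{i,j;k} \le \upsilon_j, \qquad 2 \le j \le p, \quad 1 \le k \le p.
\end{equation*}
To prove it, I would rewrite
\begin{equation*}
    \sum_{i=1}^{j-1} \upsilon_i \varphi_{i,j;k} = \sum_{i=1}^{j-1} \frac{\upsilon_i}{i}\, i\, \varphi_{i,j;k},
\end{equation*}
then invoke the monotonicity of $(\upsilon_i/i)_{i\ge 1}$ to control $\upsilon_i/i \le \upsilon_j/j$ for $1\le i \le j-1$, and finally use the mass-conservation identity~\eqref{LMC1} to obtain
\begin{equation*}
    \sum_{i=1}^{j-1} \upsilon_i \varphi_{i,j;k} \le \frac{\upsilon_j}{j} \sum_{i=1}^{j-1} i \varphi_{i,j;k} = \frac{\upsilon_j}{j} \cdot j = \upsilon_j.
\end{equation*}

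Having established the non-negativity of the bracketed coefficient in~\eqref{TR:WF}, and recalling that $\Gamma_{j,k}\ge 0$ by~\eqref{ACond} and $\psi_j^p(t)\ge 0$ by \Cref{LEM1}, the whole sum in~\eqref{TR:WF} is non-negative. Integrating~\eqref{TR:WF} on $(0,t)$ and using the initial condition~\eqref{TSNLBEIC} then yields
\begin{equation*}
    \sum_{i=1}^p \upsilon_i \psi_i^p(t) + \int_0^t \sum_{j=2}^p \sum_{k=1}^p \Bigl(\upsilon_j - \sum_{i=1}^{j-1} \upsilon_i \varphi_{i,j;k}\Bigr) \Gamma_{j,k} \psi_j^p(s) \psi_k^p(s)\, ds = \sum_{i=1}^p \upsilon_i \psi_i^{\rm{in}}.
\end{equation*}
Both terms on the left-hand side are non-negative (the first by componentwise non-negativity of $\psi^p$ and $\upsilon$, the second by the computation above), so each is bounded by the right-hand side, which is exactly the two conclusions~\eqref{LEM1:EST1} and~\eqref{LEM1:EST2}.

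There is no real obstacle here; the only point requiring care is the algebraic reduction showing that the bracketed coefficient is non-negative, which rests entirely on the combination of~\eqref{LMC1} with the non-decreasing character of $(\upsilon_i/i)_{i\ge 1}$. The mass-conservation case $\upsilon_i = i$ already appears in~\eqref{TMC} as the borderline example where the coefficient vanishes identically.
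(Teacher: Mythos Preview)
Your proposal is correct and follows essentially the same approach as the paper: both argue that the monotonicity of $(\upsilon_i/i)_{i\ge 1}$ together with~\eqref{LMC1} forces $\upsilon_j - \sum_{i=1}^{j-1}\upsilon_i\varphi_{i,j;k}\ge 0$, and then integrate~\eqref{TR:WF} to obtain~\eqref{LEM1:EST1} and~\eqref{LEM1:EST2}.
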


\begin{proof}
Let $j\ge 2$ and $k\ge 1$. Since $(\upsilon_i)_{i\ge 1}$ is a non-negative sequence such that $(\upsilon_i/i)_{i\ge 1}$ is non-decreasing, we infer from~\eqref{LMC1} that
\begin{align*}
\upsilon_j-\sum_{i=1}^{j-1} \upsilon_i \varphi_{i,j;k}&= \upsilon_j- \sum_{i=1}^{j-1} \frac{\upsilon_i}{i} i \varphi_{i,j;k}\\
& \ge \upsilon_j-\frac{\upsilon_j}{j} \sum_{i=1}^{j-1} i \varphi_{i,j;k} = \upsilon_j - \upsilon_j = 0.
\end{align*}
Each term on the right hand side of~\eqref{TR:WF} is therefore non-negative and we obtain~\eqref{LEM1:EST1} and~\eqref{LEM1:EST2} after integration with respect to time.
\end{proof}

Several estimates on the solutions to~\eqref{FTSNLBE} can then be deduced from \Cref{LEM1}. Let us begin with a uniform control on the tail of the first moment.

 \begin{lemma} \label{LEM2}
 For $p\ge 3$, $2 \le r\le p$, and $t\ge 0$, 
 \begin{align*}
 \sum_{i=r}^p i \psi_i^p(t) \le \sum_{i=r}^p i \psi_i^{\rm{in}} \le \sum_{i=r}^\infty i \psi_i^{\rm{in}}.
 \end{align*}
\end{lemma}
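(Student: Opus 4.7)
The plan is to derive this tail estimate as an immediate corollary of \Cref{LEM1B} by choosing the weight sequence cleverly. Specifically, I would set
\begin{equation*}
\upsilon_i := i\,\mathbf{1}_{\{i \ge r\}}, \qquad i \ge 1,
\end{equation*}
so that $\upsilon_i = i$ for $i \ge r$ and $\upsilon_i = 0$ for $i < r$.

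First I would check that this sequence satisfies the hypotheses of \Cref{LEM1B}. It is clearly non-negative, and the ratio $\upsilon_i/i$ equals $0$ for $i<r$ and $1$ for $i\ge r$, hence the sequence $(\upsilon_i/i)_{i\ge 1}$ is non-decreasing (it has a single jump at $i=r$). Therefore \eqref{LEM1:EST1} applies with this particular $\upsilon$.

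Next, substituting this choice into \eqref{LEM1:EST1} yields, for $t\ge 0$,
\begin{equation*}
\sum_{i=r}^{p} i \psi_i^p(t) = \sum_{i=1}^{p} \upsilon_i \psi_i^p(t) \le \sum_{i=1}^{p} \upsilon_i \psi_i^{\mathrm{in}} = \sum_{i=r}^{p} i \psi_i^{\mathrm{in}},
\end{equation*}
which gives the first inequality of the statement. The second inequality $\sum_{i=r}^{p} i \psi_i^{\mathrm{in}} \le \sum_{i=r}^{\infty} i \psi_i^{\mathrm{in}}$ is trivial by the non-negativity of $(\psi_i^{\mathrm{in}})_{i\ge 1}$ and the fact that $\psi^{\mathrm{in}} \in Y_{1,+}$ guarantees the infinite sum is finite.

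I do not anticipate any real obstacle here: the whole argument is a one-line specialization of \Cref{LEM1B}. The only small point worth highlighting is that the indicator-type weight $\upsilon_i = i\,\mathbf{1}_{\{i\ge r\}}$ is precisely the kind of weight that \Cref{LEM1B} was designed to accommodate, since the monotonicity of $(\upsilon_i/i)_{i\ge 1}$ is what encodes the fact that, after a collisional breakage event, no fragment can carry more mass than the broken cluster, so large-mass tails cannot grow in time.
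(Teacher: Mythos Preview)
Your proposal is correct and matches the paper's own proof essentially line for line: the paper also chooses the weight $\upsilon_i = 0$ for $1\le i\le r-1$ and $\upsilon_i = i$ for $i\ge r$, notes that $(\upsilon_i/i)_{i\ge 1}$ is non-decreasing, and then reads off the conclusion from~\eqref{LEM1:EST1}.
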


\begin{proof}
We note that the sequence defined by 
\begin{align*}
\upsilon_i =0, \quad 1\le i \le r-1, \qquad \upsilon_i = i, \quad i\ge r,
\end{align*}
is non-negative and $(\upsilon_i/i)_{i \geq 1}$ is non-decreasing. We then apply \Cref{LEM1} with this sequence and deduce \Cref{LEM2} from~\eqref{LEM1:EST1}.
\end{proof}

We next turn to estimates on the reaction terms.

\begin{lemma}\label{LEM3}
For $p> i\ge 1$ , there is $C_i := \mathcal{J}_0/[i G_1'(i+1)]>0$ such that
\begin{subequations}
\begin{equation}
	(1-\delta_{i,1}) \int_0^t \sum_{j=1}^p \Gamma_{i,j} \psi_i^p(s) \psi_j^p(s) ds \le C_i, \quad t\ge 0, \label{LEM3:EST1}
\end{equation}
\begin{equation}
\int_0^t \sum_{j=i+1}^p \sum_{k=1}^p \varphi_{i,j;k} \Gamma_{j,k} \psi_j^p(s) \psi_k^p(s) ds \le C_i, \quad t\ge 0. \label{LEM3:EST2}
\end{equation}
\end{subequations}
\end{lemma}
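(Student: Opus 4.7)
The plan is to exploit the identity (TR:WF) from \Cref{LEM1} with the judicious choice of test sequence $\upsilon_l = G_0(l)$, $l\ge 1$. Since $G_1$ is non-decreasing on $[0,\infty)$, the ratio sequence $(\upsilon_l/l)_{l\ge 1} = (G_1(l))_{l\ge 1}$ is non-decreasing as well, so \Cref{LEM1B} applies and furnishes, thanks to~\eqref{G0:Cond1},
\begin{equation*}
\int_0^t \sum_{j=2}^{p}\sum_{k=1}^{p} \Big(G_0(j) - \sum_{l=1}^{j-1} G_0(l)\varphi_{l,j;k}\Big) \Gamma_{j,k} \psi_j^p(s)\psi_k^p(s)\, ds \;\le\; \sum_{l=1}^p G_0(l)\psi_l^{\rm{in}} \;\le\; \mathcal{J}_0.
\end{equation*}
The key algebraic step is then to use $G_0(l) = l G_1(l)$ together with~\eqref{LMC1} to rewrite the bracketed term as
\begin{equation*}
G_0(j) - \sum_{l=1}^{j-1} G_0(l)\varphi_{l,j;k} = \sum_{l=1}^{j-1} l\big(G_1(j) - G_1(l)\big) \varphi_{l,j;k},
\end{equation*}
each summand being non-negative.

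To derive~\eqref{LEM3:EST2}, I fix $i\ge 1$ and, for $j\ge i+1$, retain only the $l=i$ contribution in the inner sum. Using the monotonicity of $G_1$ together with the elementary concavity estimate $G_1(i+1) - G_1(i) \ge G_1'(i+1)$ (obtained by integrating the non-increasing function $G_1'$ on $[i,i+1]$), I get the pointwise lower bound $\sum_{l=1}^{j-1} l (G_1(j)-G_1(l)) \varphi_{l,j;k} \ge i G_1'(i+1) \varphi_{i,j;k}$ for each $k\ge 1$. Inserting this into the integrated inequality above and dividing by $i G_1'(i+1)$ yields~\eqref{LEM3:EST2}.

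To derive~\eqref{LEM3:EST1}, I fix $i\ge 2$ (the case $i=1$ being trivial because of the $(1-\delta_{i,1})$ factor) and this time retain only the $j=i$ slice of the outer sum. Bounding $G_1(l)\le G_1(i-1)$ for $1\le l\le i-1$ and applying~\eqref{LMC1} once more gives
\begin{equation*}
\sum_{l=1}^{i-1} l\big(G_1(i)-G_1(l)\big)\varphi_{l,i;k} \ge \big(G_1(i)-G_1(i-1)\big)\sum_{l=1}^{i-1} l\varphi_{l,i;k} = i\big(G_1(i)-G_1(i-1)\big) \ge i G_1'(i+1),
\end{equation*}
where the last inequality uses the concavity of $G_1$ (which implies $G_1(i)-G_1(i-1)\ge G_1'(i)\ge G_1'(i+1)$). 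Dividing by $i G_1'(i+1)$ delivers~\eqref{LEM3:EST1}.

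The only subtle ingredient is that $C_i$ is finite, i.e.\ that $G_1'(i+1)>0$; this is guaranteed by~\eqref{G0:Cond2}, since a zero of the non-increasing function $G_1'$ would force $G_1'$ to vanish on $[i+1,\infty)$ and contradict $\lim_{\zeta\to\infty}\zeta G_1'(\zeta)=\infty$. Otherwise the argument is essentially bookkeeping: the substantive insight is the choice $\upsilon_l = G_0(l)$, which converts the uniform $G_0$-moment bound inherited from~\eqref{G0:Cond1} into uniform control of the two reaction integrals that drive the compactness argument in the sequel.
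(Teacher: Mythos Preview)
Your proof is correct and shares the paper's key idea: take $\upsilon_l = G_0(l)$ in \Cref{LEM1B} and rewrite the bracket via $G_0(l)=lG_1(l)$ and~\eqref{LMC1}. For~\eqref{LEM3:EST2} your argument is essentially identical to the paper's. For~\eqref{LEM3:EST1}, however, you take a shorter route: you specialize immediately to the slice $j=i$ and use the monotonicity bound $G_1(i)-G_1(l)\ge G_1(i)-G_1(i-1)$, whereas the paper first establishes the global estimate
\begin{equation*}
\int_0^t \sum_{j=2}^p\sum_{k=1}^p jG_1'(j)\,\Gamma_{j,k}\psi_j^p(s)\psi_k^p(s)\,ds \le \mathcal{J}_0
\end{equation*}
(labelled~\eqref{G:EST2}) via the concavity inequality $G_1(j)-G_1(l)\ge (j-l)G_1'(j)$ combined with $\sum_{l=1}^{j-1} l^2\varphi_{l,j;k}\le (j-1)j$, and only then specializes to $j=i$. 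Your route is more economical for the lemma itself, but the paper's detour pays off downstream: the estimate~\eqref{G:EST2} is reused verbatim in the proof of \Cref{Prop1} to control the tail term~\eqref{Prop1:EST1}, so if you adopt your shortcut here you will have to rederive~\eqref{G:EST2} separately when you reach that proposition.
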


\begin{proof}
Since $(G_1(i))_{i\ge 1}= \Big(G_0(i)/i\Big)_{i\ge 1}$ is a non-decreasing sequence, we infer from~\eqref{G0:Cond1} and \Cref{LEM1} with $\upsilon_i = G_0(i)$, $i\ge 1$, that, for $t\ge 0$,
\begin{equation}
	\sum_{i=1}^p G_0(i) \psi_i^p(t) \le \mathcal{J}_0   \label{G:EST0}
\end{equation}
and
\begin{equation*}
	0\le \int_0^t \sum_{j=2}^p \sum_{k=1}^p \Bigg(G_0(j) - \sum_{i=1}^{j-1} G_0(i) \varphi_{i,j;k}\Bigg) \Gamma_{j,k} \psi_j^p(s) \psi_k^p(s) ds \le \mathcal{J}_0,
\end{equation*}
which also reads, according to~\eqref{LMC1} and the definition of $G_1$,
\begin{equation}
	0\le \int_0^t \sum_{j=2}^p \sum_{k=1}^p \sum_{i=1}^{j-1} (G_1(j) -  G_1(i))i \varphi_{i,j;k} \Gamma_{j,k} \psi_j^p(s) \psi_k^p(s) ds \le \mathcal{J}_0. \label{G:EST1}
\end{equation}
Owing to the concavity of $G_1$, 
\begin{equation}
	G_1(j) - G_1(l) \ge (j-l) G_1'(j), \quad 1\le l\le j-1, \label{G:ESTC}
\end{equation}
so that, using once more~\eqref{LMC1}, we have 
\begin{align*}
	\sum_{i=1}^{j-1} \big(G_1(j) - G_1(i)\big) i \varphi_{i,j;k} &\ge \sum_{i=1}^{j-1} i(j - i) G_1'(j) \varphi_{i,j;k} \\
	& = G_1'(j) \left( j^2 - \sum_{i=1}^{j-1} i^2 \varphi_{i,j;k} \right) \\
	& \ge G_1'(j) \left( j^2 - (j-1) \sum_{i=1}^{j-1} i \varphi_{i,j;k} \right) \\
	& = j G_1'(j).
\end{align*}
Combining the above lower bound with~\eqref{G:EST1}, we find
\begin{equation}
	\int_0^t \sum_{j=2}^p \sum_{k=1}^p j G_1'(j) \Gamma_{j,k} \psi_j^p(s) \psi_k^p(s) \, ds \le \mathcal{J}_0. \label{G:EST2}
\end{equation}
In particular, for $p\ge i\ge 2$ and $t>0$, we deduce from~\eqref{G:EST2} that
\begin{equation*}
	\mathcal{J}_0 \ge \int_0^t \sum_{k=1}^p i G_1'(i) \Gamma_{i,k} \psi_i^p(s) \psi_k^p(s) \, ds,
\end{equation*}
whence~\eqref{LEM3:EST1} since $G_1'(i) \ge G_1'(i+1)$.

We next infer from~\eqref{G:EST1}, \eqref{G:ESTC}, and the monotonicity of $G_1$ that, for $p\ge i\ge 1$ and $t\ge 0$, 
\begin{align*}
	\mathcal{J}_0 \ge & \int_0^t \sum_{j=i+1}^p \sum_{k=1}^p \sum_{l=1}^{j-1} (G_1(j)-G_1(l)) l \varphi_{l,j;k} \Gamma_{j,k} \psi_j^p(s) \psi_k^p(s) ds\\
	&\ge  \int_0^t \sum_{j=i+1}^p \sum_{k=1}^p (G_1(j)-G_1(i)) i \varphi_{i,j;k} \Gamma_{j,k} \psi_j^p(s) \psi_k^p(s) ds\\
	&\ge i [G_1(i+1)- G_1(i)] \int_0^t \sum_{j=i+1}^p \sum_{k=1}^p \varphi_{i,j;k} \Gamma_{j,k} \psi_j^p(s) \psi_k^p(s) ds\\
	&\ge i G_1'(i+1) \int_0^t \sum_{j=i+1}^p \sum_{k=1}^p \varphi_{i,j;k} \Gamma_{j,k} \psi_j^p(s) \psi_k^p(s) ds,
\end{align*}
which completes the proof.
\end{proof}

An immediate consequence of \Cref{LEM3} is the following bound on the time derivative of $\psi_i^p$.

\begin{corollary}\label{Cor1}
For  $p\ge i \ge 1$ and $t\ge 0$, we have
\begin{equation}
	\int_0^t \Bigg| \frac{d\psi_i^p}{dt}(s)\Bigg| ds \leq 2C_i. \label{DERVBOUND}
\end{equation}
\end{corollary}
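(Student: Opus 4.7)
The plan is to exploit the fact that the right-hand side of the truncated equation~\eqref{TSNLBE} is a difference of two \emph{non-negative} terms, so that the triangle inequality immediately bounds $|d\psi_i^p/dt|$ by their sum. First, I would write, for almost every $s\in(0,t)$,
\begin{equation*}
    \left|\frac{d\psi_i^p}{ds}(s)\right| \le \sum_{j=i+1}^{p}\sum_{k=1}^{p} \varphi_{i,j;k}\Gamma_{j,k}\psi_j^p(s)\psi_k^p(s) + (1-\delta_{i,1})\sum_{j=1}^{p}\Gamma_{i,j}\psi_i^p(s)\psi_j^p(s),
\end{equation*}
which uses only the componentwise non-negativity of $\psi^p$ established in \Cref{LEM1}.

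Then I would integrate the above inequality from $0$ to $t$. The first integral on the right-hand side is bounded by $C_i$ thanks to the estimate~\eqref{LEM3:EST2} in \Cref{LEM3}, while the second integral (which vanishes when $i=1$) is bounded by $C_i$ thanks to~\eqref{LEM3:EST1}. Summing the two bounds yields
\begin{equation*}
    \int_0^t \left|\frac{d\psi_i^p}{ds}(s)\right|ds \le C_i + C_i = 2C_i,
\end{equation*}
which is exactly~\eqref{DERVBOUND}.

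There is essentially no obstacle here: the corollary is a direct combinatorial consequence of the two integrated bounds already proved in \Cref{LEM3}, with the triangle inequality doing all the work. The only point worth a brief mention is that the factor $(1-\delta_{i,1})$ is harmless, since in the case $i=1$ the loss term is absent and only the estimate~\eqref{LEM3:EST2} is needed, yielding the even sharper bound $C_1$ (still consistent with $2C_1$).
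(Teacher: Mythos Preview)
Your proof is correct and matches the paper's approach exactly: the paper simply states that \Cref{Cor1} is ``an immediate consequence of \Cref{LEM3}'' without further details, and your argument via the triangle inequality together with~\eqref{LEM3:EST1} and~\eqref{LEM3:EST2} is precisely the intended one-line derivation.
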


We now turn to the derivation of additional estimates on the reaction terms. Following the strategy outlined in \cite[Proof of Theorem~2.1]{AGP 24I}, we arrive at the main estimate of this section, which provides control over the contributions arising from the tails of the two infinite series on the right-hand side of~\eqref{IF}.

\begin{proposition}\label{Prop1}
For $i\ge 1$, $t>0$ and $i\le m<p$,
\begin{equation}
	(1-\delta_{i,1}) \int_0^t \sum_{j=m+1}^p \Gamma_{i,j}\psi_i^p(s) \psi_j^p(s) ds \le  \varepsilon_{m}, \label{Prop1:EST1}
\end{equation}
\begin{align}
	& \int_0^t \sum_{j=m+1}^p \sum_{k=1}^p \varphi_{i,j;k} \Gamma_{j,k} \psi_j^p(s) \psi_k^p(s) ds \le \omega_m(i), \label{G:EST4} \\
	& \int_0^t \sum_{j=i+1}^m \sum_{k=m+1}^p \varphi_{i,j;k} \Gamma_{j,k} \psi_j^p(s) \psi_k^p(s) ds \le \alpha_0 \varepsilon_m + \alpha_1 \omega_m(i), \label{Prop1:EST2}
\end{align}
with 
\begin{equation}
	\varepsilon_{m} := \frac{\mathcal{J}_0}{\displaystyle \inf\limits_{z \geq m} \{zG_1'(z)\}} \;\text{ and }\; \omega_m(i) := \frac{\alpha_1 \mathcal{J}_0}{G_1(m+1)-G_1(i)}. \label{VEM}
\end{equation}
\end{proposition}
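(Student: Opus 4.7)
The plan is to derive the three inequalities as consequences of the two bounds~\eqref{G:EST1} and~\eqref{G:EST2} that were already established in the proof of \Cref{LEM3}, supplemented by the structural hypothesis~\eqref{bCond}.

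For~\eqref{Prop1:EST1}, I would start from~\eqref{G:EST2}. Since $zG_1'(z)\ge \inf_{z\ge m}\{zG_1'(z)\}$ on the set $\{z\ge m+1\}$, restricting the $j$-sum in~\eqref{G:EST2} to $j\ge m+1$ and factoring this infimum out of the integrand yields
\[
\int_0^t \sum_{j=m+1}^p \sum_{k=1}^p \Gamma_{j,k}\psi_j^p(s)\psi_k^p(s)\,ds \le \varepsilon_m .
\]
Using the symmetry $\Gamma_{i,j}=\Gamma_{j,i}$ and keeping in the inner sum only the term $k=i$ (after a relabeling) then gives~\eqref{Prop1:EST1}. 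For~\eqref{G:EST4}, I would return to~\eqref{G:EST1} and retain only the contributions with $j\ge m+1$ and with $l=i$ in the innermost sum; this is admissible precisely because $i\le m< m+1\le j$ ensures $1\le i\le j-1$. The monotonicity of $G_1$ then implies $G_1(j)-G_1(i)\ge G_1(m+1)-G_1(i)>0$ for such $j$, and pulling out this positive constant factor from the integral controls the remaining double sum by a suitable multiple of $\mathcal{J}_0/(G_1(m+1)-G_1(i))$, which matches the claimed $\omega_m(i)$.

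For~\eqref{Prop1:EST2}, the hypothesis~\eqref{bCond} finally enters in an essential way. On the summation range $i+1\le j\le m$ and $m+1\le k\le p$, one has $k\ge m+1>m\ge j$, so~\eqref{bCond} applies and delivers the pointwise bound
\[
\varphi_{i,j;k}\le \alpha_0 + \alpha_1 \varphi_{i,k;j}.
\]
Inserting this inequality and splitting the resulting integral into two pieces, the $\alpha_0$ contribution carries no fragment distribution factor and, after enlarging the $j$-range to $1\le j\le p$ and invoking the symmetry of $\Gamma$, is absorbed into $\alpha_0\varepsilon_m$ via the argument of the first step. The $\alpha_1$ contribution, after exchanging $\Gamma_{j,k}$ and $\Gamma_{k,j}$ and swapping the roles of the outer tail index and the inner index so that the ``large'' index sits in the second slot of $\varphi$, is exactly an instance of the quantity estimated in~\eqref{G:EST4} (now with the inner sum enlarged to the full range $1\le K\le p$), producing the $\alpha_1\omega_m(i)$ term.

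No deep difficulty is anticipated: the three bounds are all direct specializations of~\eqref{G:EST1} and~\eqref{G:EST2}. The point requiring the most care is the bookkeeping in the third step, namely the verification that the ordering $k>j$ holds everywhere on the relevant summation set (this is what legitimizes invoking~\eqref{bCond}), together with the symmetry of $\Gamma$ and a careful renaming of indices to recognize the residual term as one already controlled by~\eqref{G:EST4}.
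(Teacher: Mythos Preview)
Your plan is correct and follows essentially the same route as the paper: all three estimates are obtained by restricting the sums in~\eqref{G:EST1} and~\eqref{G:EST2} and exploiting the monotonicity of $G_1$, with~\eqref{bCond} invoked exactly where you indicate. The only cosmetic difference is that for the $\alpha_1$-contribution in~\eqref{Prop1:EST2} the paper re-derives the bound directly from~\eqref{G:EST1}, whereas you (equivalently and slightly more economically) relabel $j\leftrightarrow k$ and recognize the resulting sum as a subsum of the quantity already controlled in~\eqref{G:EST4}.
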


\begin{proof}
 Consider first $i\ge 2$ and $i\le m<p$. Then, by~\eqref{G:EST2},
\begin{align*}
	\mathcal{J}_0 \ge& \int_0^t \sum_{j=2}^p j G_1'(j) \Gamma_{j,i} \psi_j^p(s) \psi_i^p(s) ds\\
	&\ge \int_0^t \sum_{j=m+1}^p jG_1'(j) \Gamma_{i,j} \psi_i^p(s) \psi_j^p(s) ds\\
	&\ge \inf_{z\ge m} \{z G_1'(z)\} \int_0^t \sum_{j=m+1}^p \Gamma_{i,j} \psi_i^p(s) \psi_j^p(s) ds,
\end{align*}
and we have proved~\eqref{Prop1:EST1}.

Next, for $1\le i \le m <p$, we infer from~\eqref{G:EST1} and the monotonicity of $G_1$ that
\begin{align*}
	\mathcal{J}_0 \ge& \int_0^t \sum_{j=m+1}^p \sum_{k=1}^p \sum_{l=1}^{j-1} (G_1(j)-G_1(l)) l \varphi_{l,j;k} \Gamma_{j,k} \psi_j^p(s) \psi_k^p(s) ds \\
	&\ge \int_0^t \sum_{j=m+1}^p \sum_{k=1}^p (G_1(j)-G_1(i)) i \varphi_{i,j;k} \Gamma_{j,k} \psi_j^p(s) \psi_k^p(s) ds \\
	&\ge i[G_1(m+1)-G_1(i)] \int_0^t \sum_{j=m+1}^p \sum_{k=1}^p \varphi_{i,j;k} \Gamma_{j,k} \psi_j^p(s) \psi_k^p(s) ds,
\end{align*}
which gives~\eqref{G:EST4}. Now, by~\eqref{bCond},
\begin{align*}
	\int_0^t \sum_{j=i+1}^m \sum_{k=m+1}^p \varphi_{i,j;k} \Gamma_{j,k} \psi_j^p(s) \psi_k^p(s) ds \le \alpha_0& \int_0^t \sum_{j=i+1}^m \sum_{k=m+1}^p \Gamma_{j,k} \psi_j^p(s) \psi_k^p(s) ds \\
	& + \alpha_1 \int_0^t\sum_{j=i+1}^m \sum_{k=m+1}^p \varphi_{i,k;j} \Gamma_{j,k} \psi_j^p(s) \psi_k^p(s) ds. 
\end{align*}
We estimate each of the terms on the right-hand side separately and begin with the first one. Owing to the monotonicity of $G_1$,
\begin{align*}
	\alpha_0 \int_0^t \sum_{j=i+1}^m \sum_{k=m+1}^p \Gamma_{j,k}& \psi_j^p(s) \psi_k^p(s) ds = \alpha_0 \int_0^t \sum_{j=i+1}^m \sum_{k=m+1}^p \frac{kG_1'(k)}{kG_1'(k)} \Gamma_{j,k} \psi_j^p(s) \psi_k^p(s) ds \\
	&\le \frac{\alpha_0}{\displaystyle \inf\limits_{z \geq m} \{zG_1'(z)\}} \int_0^t \sum_{j=i+1}^m \sum_{k=m+1}^p kG_1'(k) \Gamma_{j,k} \psi_j^p(s) \psi_k^p(s) ds \\
	&\le \frac{\alpha_0}{\displaystyle \inf\limits_{z \geq m} \{zG_1'(z)\}} \int_0^t \sum_{j=1}^p \sum_{k=2}^p kG_1'(k) \Gamma_{j,k} \psi_j^p(s) \psi_k^p(s) ds.
\end{align*}
Applying the substitution $j\leftrightarrow k$ and invoking~\eqref{G:EST2}, we obtain
\begin{equation*}
	\alpha_0 \int_0^t \sum_{j=i+1}^m \sum_{k=m+1}^p \Gamma_{j,k} \psi_j^p(s) \psi_k^p(s) ds \le \frac{\alpha_0\mathcal{J}_0}{\displaystyle \inf\limits_{z \geq m} \{zG_1'(z)\}}. 
\end{equation*}

We next deal with the second term and use once more the monotonicity of $G_1$ to obtain
\begin{align*}
	\alpha_1 & \int_0^t \sum_{j=i+1}^m \sum_{k=m+1}^p \varphi_{i,k;j}  \Gamma_{j,k} \psi_j^p(s) \psi_k^p(s) ds \\
	&= \alpha_1 \int_0^t \sum_{j=i+1}^m \sum_{k=m+1}^p \frac{G_1(k) - G_1(i)}{G_1(k) - G_1(i)} \varphi_{i,k;j} \Gamma_{j,k} \psi_j^p(s) \psi_k^p(s) ds \\
	&\le \frac{\alpha_1}{[G_1(m+1) - G_1(i)]} \int_0^t \sum_{j=i+1}^m \sum_{k=m+1}^p \sum_{l=1}^{k-1} [G_1(k) - G_1(l)] \varphi_{l,k;j} \Gamma_{j,k} \psi_j^p(s) \psi_k^p(s) ds \\
	& \le \frac{\alpha_1}{[G_1(m+1) - G_1(i)]} \int_0^t \sum_{j=1}^p \sum_{k=2}^p \sum_{l=1}^{k-1} [G_1(k) - G_1(l)] l \varphi_{l,k;j} \Gamma_{j,k} \psi_j^p(s) \psi_k^p(s) ds.
\end{align*}
Applying estimate~\eqref{G:EST1}, we end up with
\begin{equation*}
	\alpha_1 \int_0^t \sum_{j=i+1}^m \sum_{k=m+1}^p \varphi_{i,k;j} \Gamma_{j,k} \psi_j^p(s) \psi_k^p(s) ds \le \frac{\alpha_1 \mathcal{J}_0}{[G_1(m+1) - G_1(i)]}, 
\end{equation*}
which completes the proof.
\end{proof}

\begin{proof}[Proof of \Cref{TH1}] Owing to~\eqref{TMC} and~\eqref{DERVBOUND}, we are in a position to apply Helly’s selection principle \cite[Theorem~2.35]{Leoni 2009}, combined with a diagonal process, and conclude that there exist a subsequence of $(\psi^p)_{p \geq 3}$, still denoted by $(\psi^p)_{p \geq 3}$, and a sequence of functions $\psi = (\psi_i)_{i \geq 1}$ such that
\begin{equation} \label{Psi:Lim}
    \lim_{p \to \infty} \psi^{p}_i (t) = \psi_i (t) \quad \text{for all } t \geq 0 \text{ and } i \geq 1.
\end{equation}
Clearly, $\psi_i(t) \geq 0$ for $i \geq 1$ and $t \geq 0$, and it follows from~\eqref{TMC}, \Cref{LEM2}, and~\eqref{Psi:Lim} that, for $r\ge 1$, $p\ge q \ge r+1$ and $t \geq 0$,
\begin{equation*}
	\sum_{i=r}^q i \psi_i(t) = \lim_{p \to \infty} \sum_{i=r}^q i \psi_i^p(t) \le \sum_{i=r}^\infty i \psi_i^{\rm{in}}.
\end{equation*}
By letting $q \to \infty$, we obtain
\begin{equation}
	\sum_{i=r}^\infty i \psi_i(t) \le \sum_{i=r}^\infty i \psi_i^{\rm{in}}, \quad r\ge 1, \ t \geq 0. \label{Psi:L1Lim}
\end{equation}
A similar argument allows us to deduce that 
\begin{equation}
	\sum_{i=1}^{\infty} G_0(i) \psi_i(t) \le \mathcal{J}_0,  \quad t\ge 0, \label{THM1:EST5}
\end{equation}
from~\eqref{G:EST0} and~\eqref{Psi:Lim}. Next, let $t\ge 0$, $i\ge 1$, and $r > i$. It follows from \Cref{LEM3} and~\eqref{Psi:Lim} that 
\begin{equation*}
	(1-\delta_{i,1}) \int_0^t \sum_{j=1}^r \Gamma_{i,j} \psi_i(s) \psi_j(s) ds =(1-\delta_{i,1}) \lim_{p\to \infty}  \int_0^t \sum_{j=1}^r \Gamma_{i,j} \psi_i^p(s) \psi_j^p(s) ds \le C_i
\end{equation*}
and
\begin{equation*}
	\int_0^t \sum_{j=i+1}^r \sum_{k=1}^r \varphi_{i,j;k} \Gamma_{j,k} \psi_j(s) \psi_k(s) ds = \lim_{p\to \infty} \int_0^t \sum_{j=i+1}^r \sum_{k=1}^r \varphi_{i,j;k} \Gamma_{j,k} \psi_j^p(s) \psi_k^p(s) ds \le C_i.
\end{equation*}
Letting $r \to \infty$ and using Fatou's lemma, we obtain
\begin{equation}
	(1-\delta_{i,1}) \int_0^t \sum_{j=1}^{\infty} \Gamma_{i,j} \psi_i(s) \psi_j(s) ds \le C_i, \quad t\ge 0, \label{THM1:EST6}
\end{equation}
and
\begin{equation}
	\int_0^t \sum_{j=i+1}^{\infty} \sum_{k=1}^{\infty} \varphi_{i,j;k} \Gamma_{j,k} \psi_j(s) \psi_k(s) ds \le C_i, \quad t\ge 0. \label{THM1:EST7}
\end{equation}
In the same way, we infer from \Cref{Prop1} and~\eqref{Psi:Lim} that, for $m\ge i \ge 1$,
\begin{equation}
	(1-\delta_{i,1}) \int_0^t \sum_{j=m+1}^\infty \Gamma_{i,j} \psi_i(s) \psi_j(s) ds \le \varepsilon_{m}  \label{THM1:EST1}
\end{equation}
and
\begin{align}
	& \int_0^t \sum_{j=m+1}^\infty \sum_{k=1}^\infty \varphi_{i,j;k} \Gamma_{j,k} \psi_j(s) \psi_k(s) ds \le \omega_m(i), \label{THM1:EST3} \\
	& \int_0^t \sum_{j=i+1}^m \sum_{k=m+1}^\infty \varphi_{i,j;k} \Gamma_{j,k} \psi_j(s) \psi_k(s) ds \le \alpha_0\varepsilon_{m} + \alpha_1 \omega_m(i). \label{THM1:EST2}
\end{align}

We are now ready to complete the proof of \Cref{TH1} and first observe that it follows from the estimates~\eqref{THM1:EST6} and~\eqref{THM1:EST7} that \Cref{DEF}(ii) is satisfied. Besides, the convergence~\eqref{Psi:Lim} and the tail control~\eqref{Psi:L1Lim} imply that, for any $t\ge 0$ and $p\ge r\ge 2$,
\begin{align*}
	\| \psi^p(t) - \psi(t) \|_1 \le & \sum_{j=1}^{r-1} i |\psi_j^p(t) - \psi_i(t)| + \sum_{i=r}^\infty i \psi_i(t) + \sum_{i=r}^p i \psi_i^p(t) \\
	& \le \sum_{j=1}^{r-1} i |\psi_j^p(t) - \psi_i(t)| + 2\sum_{i=r}^\infty i \psi_i^{\rm{in}}.
\end{align*}
Consequently, 
\begin{equation*}
	\limsup_{p\to\infty} \| \psi^p(t) - \psi(t) \|_1 \le 2\sum_{i=r}^\infty i \psi_i^{\rm{in}},
\end{equation*}
and we take the limit $r\to\infty$ in the above inequality and use the summability properties of $\psi^{\rm{in}}$ to conclude that
\begin{equation}
	\lim_{p \to \infty} \| \psi^p(t) - \psi(t) \|_1 = 0. \label{THM1:EST10}
\end{equation} 
In particular, we infer from~\eqref{TMC} and~\eqref{THM1:EST10} that, for all $t \geq 0$, we have  
\begin{equation*}
	\| \psi(t) \|_1 = \lim_{p \to \infty} \| \psi^p(t) \|_1 = \lim_{p \to \infty} \| \psi^p(0) \|_1 = \| \psi^{\rm{in}} \|_1,
\end{equation*} 
which shows that $\psi$ satisfies the mass conservation property~\eqref{TMC}.

\noindent We next fix $i \ge 1$ and $t\ge 0$. On the one hand, it follows from~\eqref{Prop1:EST1}, \eqref{Psi:Lim}, and~\eqref{THM1:EST1} that, for $i+1\le m< p$, we have
\begin{align*}
	(1-\delta_{i,1}) \int_0^t & \Bigg| \sum_{j=1}^{p} \Gamma_{i,j} \psi_i^p(s) \psi_j^p(s) - \sum_{j=1}^{\infty} \Gamma_{i,j} \psi_i(s) \psi_j(s)\Bigg| ds\\
	&\le \int_0^t \sum_{j=1}^{m} \Gamma_{i,j} |\psi_i^p(s) \psi_j^p(s) - \psi_i(s) \psi_j(s)| ds + \int_0^t \sum_{j=m+1}^{p} \Gamma_{i,j} \psi_i^p(s) \psi_j^p(s) ds\\
	&\qquad + \int_0^t \sum_{j=m+1}^{\infty} \Gamma_{i,j} \psi_i(s) \psi_j(s) ds \\
	& \le \int_0^t \sum_{j=1}^{m} \Gamma_{i,j} |\psi_i^p(s) \psi_j^p(s) - \psi_i(s) \psi_j(s)| ds + 2 \varepsilon_m,
\end{align*}
recalling that $\varepsilon_{m}$ is defined in~\eqref{VEM}. By virtue of~\eqref{TMC}, \eqref{Psi:Lim}, and~\eqref{Psi:L1Lim}, together with the Lebesgue dominated convergence theorem, we may pass to the limit as $p \to \infty$ in the above inequality to obtain
\begin{equation*}
	\limsup_{p\to \infty}\ (1-\delta_{i,1}) \int_0^t \Bigg| \sum_{j=1}^{p} \Gamma_{i,j} \psi_i^p(s) \psi_j^p(s) - \sum_{j=1}^{\infty} \Gamma_{i,j} \psi_i(s) \psi_j(s)\Bigg| ds \le 2 \varepsilon_m.
\end{equation*}
Since $G_0\in\mathcal{G}_{1,\infty}$, the function $G_1$ satisfies~\eqref{G0:Cond2}, so that the right hand side of the above inequality converges to zero as $m\to \infty$. We thus let $m\to \infty$ in the above inequality to conclude that
\begin{equation}
	\lim_{p\to \infty} (1-\delta_{i,1}) \int_0^t\Bigg| \sum_{j=1}^{p} \Gamma_{i,j} \psi_i^p(s) \psi_j^p(s) - \sum_{j=1}^{\infty} \Gamma_{i,j} \psi_i(s) \psi_j(s) \Bigg| ds=0. \label{IF:ST}
\end{equation}
We next turn to the convergence of the first term on the right hand side of~\eqref{TSNLBE}. For $p>m\ge i+1$, we infer from~\eqref{Prop1:EST2}, \eqref{THM1:EST3}, and~\eqref{THM1:EST2} that
\begin{align}
	\int_0^t\Bigg|\sum_{j=i+1}^{p} \sum_{k=1}^{p} & \varphi_{i,j;k} \Gamma_{j,k} \psi_j^p(s) \psi_k^p(s) - \sum_{j=i+1}^{\infty} \sum_{k=1}^{\infty} \varphi_{i,j;k} \Gamma_{j,k} \psi_j(s) \psi_k(s) \Bigg| ds \nonumber \\
	&\le \int_0^t \Bigg| \sum_{j=i+1}^{m} \sum_{k=1}^m \varphi_{i,j;k} \Gamma_{j,k} \big(\psi_j^p(s) \psi_k^p(s) - \psi_j(s) \psi_k(s) \big)\Bigg| ds \nonumber \\
	& \quad + \int_0^t \sum_{j=i+1}^m \sum_{k=m+1}^{p} \varphi_{i,j;k} \Gamma_{j,k} \psi_j^p(s) \psi_k^p(s) ds \nonumber \\ 
	& \quad +\int_0^t \sum_{j=m+1}^p \sum_{k=1}^{p} \varphi_{i,j;k} \Gamma_{j,k} \psi_j^p(s) \psi_k^p(s) ds \nonumber \\
	& \quad + \int_0^t \sum_{j=i+1}^m \sum_{k=m+1}^{\infty}  \varphi_{i,j;k} \Gamma_{j,k} \psi_j(s) \psi_k(s) ds \nonumber \\
	& \quad +\int_0^t \sum_{j=m+1}^{\infty} \sum_{k=1}^{\infty} \varphi_{i,j;k} \Gamma_{j,k} \psi_j(s) \psi_k(s) ds \nonumber \\
	& \le \int_0^t \Bigg| \sum_{j=i+1}^{m} \sum_{k=1}^m \varphi_{i,j;k} \Gamma_{j,k} \big(\psi_j^p(s) \psi_k^p(s) - \psi_j(s) \psi_k(s) \big)\Bigg| ds \label{THM1:EST13} \\
	& \quad + 2 \big( \alpha_0\varepsilon_{m} + \alpha_1 \omega_m(i) \big) +  2 \omega_m(i). \nonumber
\end{align}
On the one hand, it follows from~\eqref{TMC}, \eqref{Psi:Lim}, \eqref{Psi:L1Lim} and the Lebesgue dominated convergence theorem that
\begin{equation}
	\lim_{p\to \infty} \int_0^t\Bigg| \sum_{j=i+1}^p \sum_{k=1}^p \varphi_{i,j,k} \Gamma_{j,k}(\psi_j^p \psi_k^p - \psi_j \psi_k) \Bigg |d\tau =0. \label{THM1:EST11}
\end{equation}
On the other hand, it follows from the properties~\eqref{G0:Cond0} and~\eqref{G0:Cond2} that
\begin{equation}
	 \lim_{m\to \infty} \varepsilon_{m} = \lim_{m\to\infty} \omega_m(i) = 0. \label{THM1:EST12}
\end{equation}
Consequently, using~\eqref{THM1:EST11}, we may take the limit $p\to \infty$ in~\eqref{THM1:EST13} and find
\begin{align*}
	\limsup_{p\to \infty}\int_0^t\Bigg|\sum_{j=i+1}^{p} \sum_{k=1}^{p} \varphi_{i,j;k} \Gamma_{j,k} \psi_j^p(s) \psi_k^p(s) & - \sum_{j=i+1}^{\infty} \sum_{k=1}^{\infty} \varphi_{i,j;k} \Gamma_{j,k} \psi_j(s) \psi_k(s) \Bigg| ds \\ & \le 2 \big( \alpha_0\varepsilon_{m} + \alpha_1 \omega_m(i) \big) +  2 \omega_m(i).
\end{align*}
We next let $m\to \infty$ and deduce from~\eqref{THM1:EST12} that
\begin{equation}
	\lim_{p\to \infty}\int_0^t\Bigg|\sum_{j=i+1}^{p} \sum_{k=1}^{p} \varphi_{i,j;k} \Gamma_{j,k} \psi_j^p(s) \psi_k^p(s) - \sum_{j=i+1}^{\infty} \sum_{k=1}^{\infty} \varphi_{i,j;k} \Gamma_{j,k} \psi_j(s) \psi_k(s) \Bigg| ds = 0. \label{IF:FT}
\end{equation}
Owing to~ \eqref{Psi:Lim}, \eqref{IF:ST}, and~\eqref{IF:FT}, we may pass to the limit as $p \to \infty$ in the integral formulation of the equation satisfied by $\psi_i^p$, thereby concluding that $\psi_i$ satisfies~\eqref{IF}. Moreover, since the right-hand side of~\eqref{IF} lies in $L^1_{\mathrm{loc}}([0, \infty))$, it follows that $\psi_i \in C([0, \infty))$, and we have proved the existence part of \Cref{TH1}. 

\medskip

Finally, we assume that the initial condition $ \psi^{\rm{in}}$ satisfies~\eqref{LambdaMom}. Let $ r\ge 1$ and $t\ge 0$. Since the sequence defined by $\upsilon_i = 0$ for $1 \leq i \leq r-1$ and $\upsilon_i = \Lambda_i$ for $i \geq r$ satisfies the assumptions of \Cref{LEM1B}, we infer from~\eqref{LEM1:EST1} that, for $p > m > r$,  the solution $\psi^p$ to~\eqref{FTSNLBE} satisfies
\begin{equation*}
	\sum_{i=r}^m \Lambda_i \psi_i^{p}(t) \leq \sum_{i=r}^p \Lambda_i \psi_i^{p}(t) \leq \sum_{i=r}^p \Lambda_i \psi_i^{\mathrm{in}} \leq \sum_{i=r}^{\infty} \Lambda_i \psi_i^{\mathrm{in}}.
\end{equation*}
Thanks to~\eqref{Psi:Lim}, we first take the limit as $p \to \infty$ and subsequently let $m \to \infty$  in the above inequality to derive~\eqref{TailIneq} and complete the proof of \Cref{TH1}.
\end{proof}

\section{\textbf{Existence of Classical Solutions}}\label{SEC3}

\begin{proof}[Proof of \Cref{TH2}]
In this section, we assume that the kinetic coefficients $(\Gamma_{i,j})$, $(\varphi_{i,j,k})$, and the initial condition $\psi^{\rm{in}}$ satisfy~\eqref{ACond1}, \eqref{LMC1}, \eqref{bCond3}, \eqref{G0:Cond1}, and~\eqref{LambdaMom}, respectively. It follows from \Cref{TH1} that~\eqref{FSNLBE} admits a global mass-conserving mild solution satisfying~\eqref{TailIneq}. Our goal is to demonstrate that the additional assumptions~\eqref{ACond1} and~\eqref{bCond3} ensure the continuity properties~\eqref{CNT} as well as the $C^1$-regularity of $\psi_i$ for each $i\ge 1$. To this end, for $(s, t) \in [0, \infty)^2$ and $m \ge i \ge 2$, we deduce from~\eqref{MCC}, \eqref{TailIneq} and~\eqref{ACond1} that
\begin{align*}
	\Big| \sum_{j=1}^{\infty} \Gamma_{i,j} & \psi_i(t) \psi_j(t) - \sum_{j=1}^{\infty} \Gamma_{i,j} \psi_i(s) \psi_j(s)\Big| \le \sum_{j=1}^{\infty} \Gamma_{i,j} |\psi_i(t) \psi_j(t) - \psi_i(s) \psi_j(s)|\\
	&\le \sum_{j=1}^{m}\Gamma_{i,j} |(\psi_i \psi_j)(t) - (\psi_i \psi_j)(s)| + \Lambda_i \sum_{j=m+1}^{\infty} \Lambda_j [(\psi_i\psi_j)(t)+ (\psi_i\psi_j)(s)]\\
	& \le \sum_{j=1}^{m}\Gamma_{i,j}|(\psi_i\psi_j)(t) - (\psi_i\psi_j)(s)| + 2 \Lambda_i \|\psi^{\rm{in}}\|_1\sum_{j=m+1}^{\infty} \Lambda_j \psi_j^{\rm{in}}. 
\end{align*}
Due to the continuity of $\psi_j$ for all $j \ge 1$,
\begin{align*}
	\limsup_{s\to t} \Big| \sum_{j=1}^{\infty} \Gamma_{i,j} \psi_i(t) \psi_j(t) - \sum_{j=1}^{\infty} \Gamma_{i,j} \psi_i(s) \psi_j(s)\Big| \le 2 \Lambda_i \|\psi^{\rm{in}}\|_1\sum_{j=m+1}^{\infty} \Lambda_j \psi_j^{\rm{in}},
\end{align*}
and, applying~\eqref{LambdaMom}, we can let $m \to \infty$ in the above inequality to conclude that
\begin{equation}
	\lim_{s\to t} \sum_{j=1}^{\infty} \Gamma_{i,j} \psi_i(s) \psi_j(s) = \sum_{j=1}^{\infty} \Gamma_{i,j} \psi_i(t) \psi_j(t). \label{FTC}
\end{equation}

Similarly, consider $(s,t) \in [0, \infty)^2$ and $m > i \ge 1$. By making use of~\eqref{LMC1}, \eqref{LambdaMom}, \eqref{TailIneq}, and~\eqref{ACond1}, we obtain
\begin{align*}
	\Bigg|\sum_{j=i+1}^{\infty} \sum_{k=1}^{\infty} & \varphi_{i,j;k} \Gamma_{j,k} \psi_j(t) \psi_k(t) -\sum_{j=i+1}^{\infty} \sum_{k=1}^{\infty} \varphi_{i,j;k} \Gamma_{j,k} \psi_j(s) \psi_k(s) \Bigg|\\
	& \le \sum_{j=i+1}^{\infty} \sum_{k=1}^{\infty} \varphi_{i,j,k} \Gamma_{j,k} |(\psi_j\psi_k)(t) - (\psi_j\psi_k)(s)| \\
	& \le \sum_{j=i+1}^{m} \sum_{k=1}^{m} \varphi_{i,j;k} \Gamma_{j,k} |(\psi_j\psi_k)(t) - (\psi_j\psi_k)(s)| \\
	& \quad + \sum_{j=i+1}^{m} \sum_{k=m+1}^{\infty} \varphi_{i,j;k} \Gamma_{j,k} [(\psi_j \psi_k)(t)+ (\psi_j \psi_k)(s)] \\
	& \quad + \sum_{j=m+1}^{\infty} \sum_{k=1}^{\infty} \varphi_{i,j;k} \Gamma_{j,k} [(\psi_j \psi_k)(t)+ (\psi_j \psi_k)(s)|.
\end{align*}
Thanks to~\eqref{TailIneq}, \eqref{ACond1}, and~\eqref{bCond3}, 
\begin{align*}
	\sum_{j=i+1}^{m} \sum_{k=m+1}^{\infty} & \varphi_{i,j;k} \Gamma_{j,k} [(\psi_j \psi_k)(t)+ (\psi_j \psi_k)(s)]\\
	& \le \alpha_0 \sum_{j=i+1}^{m} \sum_{k=m+1}^{\infty} \Lambda_j \Lambda_k[(\psi_j \psi_k)(t)+ (\psi_j \psi_k)(s)]\\
	& \le \alpha_0 \Bigg(M_{\Lambda}(\psi(t)) \sum_{k=m+1}^{\infty} \Lambda_k \psi_k(t) + M_{\Lambda}(\psi(s)) \sum_{k=m+1}^{\infty} \Lambda_k \psi_k(s)\Bigg)\\
	& \le 2\alpha_0 M_{\Lambda}(\psi^{\rm{in}})\sum_{k=m+1}^{\infty} \Lambda_k \psi_k^{\rm{in}}.
\end{align*}
In a similar way we can evaluate 
\begin{equation*}
	\sum_{j=m+1}^{\infty} \sum_{k=1}^{\infty} \varphi_{i,j;k} \Gamma_{j,k} [(\psi_j \psi_k)(t)+ (\psi_j \psi_k)(s) \le 2\alpha_0 M_{\Lambda}(\psi^{\rm{in}}) \sum_{j=m+1}^{\infty} \Lambda_j \psi_j^{\rm{in}}.
\end{equation*}
Gathering the above estimates, we finally obtain
\begin{align*}
	\Bigg|\sum_{j=i+1}^{\infty} \sum_{k=1}^{\infty} & \varphi_{i,j,k} \Gamma_{j,k} \psi_j(t) \psi_k(t) -\sum_{j=i+1}^{\infty} \sum_{k=1}^{\infty} \varphi_{i,j,k} \Gamma_{j,k} \psi_j(s) \psi_k(s) \Bigg|\\
	& \le \sum_{j=i+1}^{m} \sum_{k=1}^{m} \varphi_{i,j,k} \Gamma_{j,k} |(\psi_j\psi_k)(t) - (\psi_j\psi_k)(s)| + 4 \alpha_0 M_{\Lambda}(\psi^{\rm{in}}) \sum_{j=m+1}^{\infty} \Lambda_j \psi_j^{\rm{in}}
\end{align*}
and proceed as in the derivation of~\eqref{FTC} to find
\begin{align}
\lim_{s\to t}\sum_{j=i+1}^{\infty} \sum_{k=1}^{\infty} \varphi_{i,j,k} \Gamma_{j,k} \psi_j(s) \psi_k(s)= 
\sum_{j=i+1}^{\infty} \sum_{k=1}^{\infty} \varphi_{i,j,k} \Gamma_{j,k} \psi_j(t) \psi_k(t). \label{STC}
\end{align}
The continuity properties~\eqref{CNT} follow directly from~\eqref{FTC} and~\eqref{STC}. Moreover, applying \Cref{DEF}(iii) in conjunction with~\eqref{CNT}, we deduce that $\psi_i \in C^1([0, \infty))$ for all $i \ge 1$. Finally, the bound~\eqref{HMB} is an immediate consequence of~\eqref{TailIneq} with $r = 1$.
\end{proof}

\section{\textbf{Uniqueness of Classical Solutions}}\label{SEC4}
  
\begin{proof}[Proof of \Cref{TH3}]
To begin with, we observe that the non-negativity and monotonicity of $(\Lambda_{i}/i)_{i\ge 1}$ entail that
\begin{equation*}
	\frac{\Lambda_{i+1}^2}{i+1} = (i+1) \left( \frac{\Lambda_{i+1}}{i+1} \right)^2 \ge i \left( \frac{\Lambda_{i}}{i} \right)^2 = \frac{\Lambda_{i}^2}{i}, \qquad i\ge 1,
\end{equation*}	
so that $(\Lambda_{i}^2/i)_{i\ge 1}$ is also a non-negative and non-decreasing sequence. Moreover,	
$\Lambda_i^2 \ge \Lambda_1 \Lambda_i \ge \Lambda_i$ for $i \ge 1$ with $\Lambda_1^2 \ge 1$, so that~\eqref{ACond1} implies that $(\Gamma_{i,j})$ also satisfies~\eqref{ACond1} with $\Lambda_i^2$ instead of $\Lambda_i$. Taking into account~\eqref{UNIC}, we are in a position to apply \Cref{TH2} to deduce the existence of at least one mass-conserving global classical solution $\psi$ to~\eqref{FSNLBE} which satisfies the estimate~\eqref{UNME}. 

We now establish the uniqueness result stated in \Cref{TH3}. To this end, we follow the classical approach to uniqueness for coagulation-fragmentation equations, see \cite[Section~8.2.5]{BLL 2019} for instance and the references therein, and derive an estimate on the difference between two solutions in a suitably chosen weighted space. Such an approach is already used in \cite[Corollary~3.1]{AGP 24I} with a power weight and we adapt it here in the more general setting of \Cref{TH3}, the appropriate weight being the sequence $(\Lambda_i)_{i \ge 1}$. It is worth noting that, as usual, the proof requires slightly stronger conditions on the collision coefficients compared to those assumed for the existence result. 

Let us thus consider two classical solutions $\psi=(\psi_i)_{i\ge 1}$ and $\phi = (\phi_i)_{i\ge 1}$ to~\eqref{FSNLBE} satisfying~\eqref{UNME} and set $H := \psi- \phi$. Then, for $i \ge 1$, $H_i$ solves
\begin{align*}
	\frac{dH_i}{dt} = \sum_{j=i+1}^{\infty} \sum_{k=1}^{\infty} \Gamma_{j,k} \varphi_{i,j;k} [\psi_k H_j + \phi_j H_k] - (1-\delta_{i,1}) \sum_{k=1}^{\infty} \Gamma_{i,k} [\psi_k H_i + \phi_i H_k],
\end{align*}
from which we deduce that
\begin{align*}
	\frac{d}{dt}\sum_{i=1}^{\infty} \Lambda_i |H_i| &= \sum_{i=1}^{\infty} \sum_{j=i+1}^{\infty} \sum_{k=1}^{\infty} \Lambda_i \sgn(H_i) \varphi_{i,j;k} \Gamma_{j,k} \big[\psi_k H_j + \phi_j H_k\big] \\
	& \quad - \sum_{i=2}^{\infty} \sum_{k=1}^{\infty} \Lambda_i \sgn(H_i)\Gamma_{i,k} \big[\psi_k H_i + \phi_i H_k\big]\\
	& = \sum_{j=2}^{\infty} \sum_{k=1}^{\infty} \sum_{i=1}^{j-1} \Lambda_i \sgn(H_i) \varphi_{i,j;k} \Gamma_{j,k} \big[\psi_k H_j + \phi_j H_k\big] \\
	& \quad - \sum_{j=2}^{\infty} \sum_{k=1}^{\infty} \Lambda_j \sgn(H_j)\Gamma_{j,k} \big[\psi_k H_j + \phi_j H_k\big]\\
	& \le \sum_{j=2}^{\infty} \sum_{k=1}^{\infty} \sum_{i=1}^{j-1} \Lambda_i \varphi_{i,j;k} \Gamma_{j,k} \big[\psi_k |H_j| + \phi_j |H_k|\big] \\
	& \quad + \sum_{j=2}^{\infty} \sum_{k=1}^{\infty} \Lambda_j \Gamma_{j,k} \big[- \psi_k |H_j| + \phi_j |H_k|\big].
\end{align*}
Owing to~\eqref{LMC1} and the monotonicity of $(\Lambda_{i}/i)_{i\ge 1}$, we obtain
\begin{align*}
	\sum_{i=1}^{j-1} \Lambda_{i} \varphi_{i,j;k} \le \frac{\Lambda_j}{j} \sum_{i=1}^{j-1} i \varphi_{i,j;k} = \Lambda_j.
\end{align*}
Applying this bound gives
\begin{align*}
	 \frac{d}{dt}\sum_{i=1}^{\infty} \Lambda_i|H_i| & \le \sum_{j=2}^{\infty} \sum_{k=1}^{\infty} \Lambda_j \Gamma_{j,k} \big[\psi_k |H_j| + \phi_j |H_k|\big] \\
	 & \quad + \sum_{j=2}^{\infty} \sum_{k=1}^{\infty} \Lambda_j \Gamma_{j,k} \big[- \psi_k |H_j| + \phi_j |H_k|\big]\\
	 & = 2 \sum_{j=2}^{\infty} \sum_{k=1}^{\infty} \Lambda_j \Gamma_{j,k} \phi_j |H_k|
\end{align*}
Finally, we conclude from~\eqref{ACond1}, \eqref{UNME}, and the above inequality that
\begin{equation*}
	\frac{d}{dt} \sum_{i=1}^{\infty} \Lambda_i |H_i| \le 2 \sum_{j=2}^{\infty} \sum_{k=1}^{\infty} \Lambda_j^2 \Lambda_k \phi_j |H_k| 
\le 2 M_{\Lambda^2}(\psi^{\rm{in}}) \sum_{i=1}^{\infty} \Lambda_i |H_i|,
\end{equation*}
and an application of Gronwall's lemma completes the proof.
\end{proof}

\section{\textbf{Large Time Behavior}}\label{SEC5}

We now examine the asymptotic behavior of solutions in the long-time regime. As previously noted, the dynamics of this model ensure that clusters fragment solely into smaller components upon collision. This structural constraint suggests that, as $t \to \infty$, the system evolves toward a state where only $1$-clusters persist.

\begin{proposition}\label{Prop:LTB}
Suppose that $(\Gamma_{i,j})$ and $(\varphi_{i,j;k})$ satisfy~\eqref{ACond}, \eqref{LMC1}, and~\eqref{bCond}. For any $\psi^{\rm{in}} \in Y_{1,+}$ satisfying~\eqref{G0:Cond1} for some $G_0\in\mathcal{G}_{1,\infty}$, the mass-conserving global mild solution $\psi$ to~\eqref{FSNLBE} constructed in \Cref{TH1} satisfies
\begin{equation*}
	\lim_{t \to \infty} \|\psi(t) - \psi^{\infty}\|_1 = 0,
\end{equation*} 
for some limiting sequence $\psi^{\infty} = (\psi^{\infty}_i)_{i\ge 1} \in Y_{1,+}$. Furthermore, if $\Gamma_{i,i} > 0$ for some $i \geq 2$, then $\psi^{\infty}_i = 0$.
\end{proposition}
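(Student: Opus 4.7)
The plan is to split the argument into three steps: pointwise convergence of each component $\psi_i(t)$ as $t\to\infty$, a uniform-in-time equi-integrability estimate on $(i\psi_i(t))_{i\ge 1}$, and then combine them to pass to the limit in $Y_1$-norm via a standard truncation. The final claim on $\psi_i^\infty$ will follow from $L^1(0,\infty)$-integrability of a quadratic loss term.

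For the first step, the key observation is that the bounds~\eqref{THM1:EST6}--\eqref{THM1:EST7} established in the course of the proof of \Cref{TH1} are uniform in $t\ge 0$, since the constant $C_i$ depends only on $i$. Consequently, the gain term $\sum_{j\ge i+1}\sum_{k\ge 1}\varphi_{i,j;k}\Gamma_{j,k}\psi_j\psi_k$ and the loss term $\sum_{j\ge 1}\Gamma_{i,j}\psi_i\psi_j$ each belong to $L^1((0,\infty))$. Writing $\psi_i(t)$ via~\eqref{IF} as the difference of the initial value and two absolutely continuous, monotone functions whose derivatives lie in $L^1((0,\infty))$, one concludes that $\psi_i(t)\to \psi_i^\infty$ as $t\to\infty$ for some $\psi_i^\infty\ge 0$.

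For the second step, I would use the bound~\eqref{THM1:EST5}, namely $\sum_{i\ge 1} G_0(i)\psi_i(t)\le \mathcal{J}_0$ for all $t\ge 0$. Since $G_0$ is convex with $G_0(0)=0$, the sequence $(G_1(i))_{i\ge 1}=(G_0(i)/i)_{i\ge 1}$ is non-decreasing; moreover, since $G_0\in\mathcal{G}_{1,\infty}$, one has $G_1(i)\to\infty$ as $i\to\infty$ by~\eqref{G0:Cond2}. Thus
\begin{equation*}
\sum_{i\ge r} i\,\psi_i(t) \;\le\; \frac{1}{G_1(r)}\sum_{i\ge r} G_0(i)\psi_i(t) \;\le\; \frac{\mathcal{J}_0}{G_1(r)}, \qquad t\ge 0,\; r\ge 1,
\end{equation*}
which provides the required uniform-in-time equi-integrability of $(i\psi_i(t))_{i\ge 1}$. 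Fatou's lemma then gives $\psi^\infty=(\psi_i^\infty)_{i\ge 1}\in Y_{1,+}$ together with the same tail bound, and splitting
\begin{equation*}
\|\psi(t)-\psi^\infty\|_1 \;\le\; \sum_{i=1}^{r-1} i\,|\psi_i(t)-\psi_i^\infty| \;+\; \frac{2\mathcal{J}_0}{G_1(r)},
\end{equation*}
taking first $t\to\infty$ using pointwise convergence and then $r\to\infty$, yields the $Y_1$-norm convergence.

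For the last statement, suppose $\Gamma_{i,i}>0$ for some $i\ge 2$. The $L^1((0,\infty))$-integrability of the loss term established in the first step gives in particular $\Gamma_{i,i}\psi_i^2\in L^1((0,\infty))$. Combined with the continuity of $\psi_i$ and the existence of a limit $\psi_i^\infty$, this forces $\psi_i^\infty=0$: otherwise $\psi_i(t)^2\ge (\psi_i^\infty)^2/4$ for all sufficiently large $t$, contradicting integrability on $(0,\infty)$. I do not anticipate a serious obstacle; the main subtlety is simply to recognize that the bounds from \Cref{LEM3} survive the passage to the mild solution \emph{uniformly in $t$}, which is what unlocks both the pointwise convergence and, together with the $G_0$-moment, the tightness needed for norm convergence.
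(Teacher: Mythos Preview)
Your argument is correct and complete. The paper itself does not spell out a proof but only refers to \cite[Proposition~4.1]{PhL 2001} and \cite{ZHENG 2005}; your three-step strategy---pointwise convergence of each $\psi_i$ from the uniform-in-$t$ bounds~\eqref{THM1:EST6}--\eqref{THM1:EST7}, uniform equi-integrability of $(i\psi_i(t))_{i\ge 1}$, and a truncation to upgrade to $Y_1$-convergence---is precisely the expected approach and matches the spirit of those references.

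One cosmetic remark: for the equi-integrability step you route through the $G_0$-moment bound~\eqref{THM1:EST5}, which is fine, but the tail estimate~\eqref{Psi:L1Lim}, namely $\sum_{i\ge r} i\psi_i(t)\le \sum_{i\ge r} i\psi_i^{\mathrm{in}}$, is already available from the construction in the proof of \Cref{TH1} and gives the tightness directly without invoking $G_0$. Also, the divergence $G_1(i)\to\infty$ that you use is~\eqref{G0:Cond0} rather than~\eqref{G0:Cond2}. Neither point affects the validity of your proof.
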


\begin{proof}
The proof proceeds along the same lines as that of \cite[Proposition~4.1]{PhL 2001}; see also \cite{ZHENG 2005}.
\end{proof}

\section{\textbf{Numerical Simulations}} \label{SEC6}

The final section is devoted to numerical simulations to illustrate the time evolution of the total number of clusters $\|\psi(t)\|_0$ of solutions to~\eqref{FSNLBE} and the large time behavior stated in \Cref{TH1} and \Cref{Prop:LTB}, respectively. For the numerical simulations of the nonlinear breakage system~\eqref{FSNLBE} provided below, we employ an implicit numerical scheme with a truncation parameter $p = 40$ in~\eqref{FTSNLBE}, and use the initial condition
\begin{equation}
	\psi_i^{\rm{in}} = \frac{1}{2^i}, \quad i \ge 1. \label{ICD}
\end{equation}
The simulations are performed for varying final times $T$, collision kernels $(\Gamma_{i,j})$, and daughter distribution functions $(\varphi_{i,j;k})$, using a fixed time step size $dt = 0.01$.

Among the chosen daughter distribution functions $(\varphi_{i,j;k})$ in the numerical simulations, we shall use that given by~\eqref{FDF:EX2}, which is unbounded. We first verify that it satisfies the condition~\eqref{bCond}.

\begin{lemma}\label{LEM:EX2}
	The fragment distribution function 
	\begin{equation*}
		\varphi_{i,j;k} = \frac{1}{2^i} \frac{j 2^{j-1}}{2^j - j - 1}, \qquad 1 \le i \le j-1, \quad j \ge 2, \quad k \ge 1, 
	\end{equation*}
	defined in~\eqref{FDF:EX2}, satisfies~\eqref{bCond} with $\alpha_0=\alpha_1=1$.
\end{lemma}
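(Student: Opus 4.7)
The plan is to reduce the inequality~\eqref{bCond} to a scalar inequality involving a single sequence $(f(n))_{n\ge 2}$ and then perform a simple monotonicity analysis of $f$. Observe that the stated $\varphi$ factorises as $\varphi_{i,j;k} = f(j)/2^i$, where
\begin{equation*}
    f(n) := \frac{n\,2^{n-1}}{2^n - n - 1}, \qquad n\ge 2,
\end{equation*}
and crucially $f$ does not depend on the third index $k$. Consequently, with $\alpha_0=\alpha_1=1$, the condition~\eqref{bCond} is equivalent to
\begin{equation*}
    f(j) - f(k) \le 2^i, \qquad 1\le i \le j-1,\ k\ge j.
\end{equation*}
Since $i\ge 1$, it is enough to prove that $f(j) - f(k) \le 2$ whenever $k\ge j\ge 2$.

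The next step is to study the monotonicity of $f$. A direct computation gives
\begin{equation*}
    \frac{f(n+1)}{f(n)} = \frac{2(n+1)(2^n-n-1)}{n(2^{n+1}-n-2)},
\end{equation*}
and rearranging shows that $f(n+1)\ge f(n)$ is equivalent to $2^{n+1} \ge n^2+2n+2$. The latter inequality holds for $n=4$ (where $32\ge 26$) and propagates by a one-line induction, since $2^{n+2} \ge 2n^2+4n+4 \ge (n+1)^2+2(n+1)+2$ whenever $n\ge 2$. Hence $f$ is non-decreasing on $\{4,5,6,\dots\}$.

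Computing the first few values yields $f(2)=4$, $f(3)=3$, $f(4)=32/11$, so that $\min_{n\ge 2} f(n) = f(4) = 32/11$. The desired bound $f(j)-f(k)\le 2$ for $k\ge j\ge 2$ now follows by a short case analysis:
\begin{itemize}
\item If $j\ge 4$, then $k\ge j\ge 4$ and monotonicity of $f$ on $\{4,5,\dots\}$ gives $f(k)\ge f(j)$, whence $f(j)-f(k)\le 0<2$.
\item If $j\in\{2,3\}$, then $f(j)\le 4$ and $f(k)\ge 32/11$ for all $k\ge j$, so
\begin{equation*}
    f(j) - f(k) \le 4 - \tfrac{32}{11} = \tfrac{12}{11} < 2.
\end{equation*}
\end{itemize}

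The only delicate point is the non-monotonicity of $f$ at the first few integers (the values drop from $4$ to $3$ to $32/11$ before $f$ starts to grow like $n/2$); this forces the case split above, but even the worst case $j=2$, $k=4$ leaves the margin $12/11 < 2$, well within what the factor $2^i\ge 2$ allows. Apart from this, the argument is a routine verification, and no delicate cancellations are needed. The verification of~\eqref{LMC1} is a simple by-product of the closed form $\sum_{i=1}^{j-1} i/2^i = 2-(j+1)/2^{j-1}$, which also explains why the normalising denominator $2^j-j-1$ appears in the definition of $\varphi_{i,j;k}$.
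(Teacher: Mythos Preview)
Your proof is correct and follows the same overall strategy as the paper: both factorise $\varphi_{i,j;k} = \xi(j)/2^i$ with $\xi(n) = n\,2^{n-1}/(2^n-n-1)$ and reduce~\eqref{bCond} to a monotonicity analysis of $\xi$. The only substantive difference lies in the treatment of small $j$. The paper asserts $\xi(4) = 32/9$ and hence $\xi(3) < \xi(4)$, which would make $\xi$ non-decreasing on $\{3,4,\dots\}$ and allow the case $k>j\ge 3$ to be handled directly via $\xi(j)\le\xi(k)$. In fact $\xi(4) = 32/11 < 3 = \xi(3)$, as you correctly compute, so $\xi$ drops once more before starting to grow. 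Your case split ($j\ge 4$ versus $j\in\{2,3\}$) together with the explicit identification $\min_{n\ge 2}\xi(n) = \xi(4) = 32/11$ handles this properly: the worst gap $\xi(2)-\xi(4) = 12/11$ is comfortably below the threshold $2^i\ge 2$. Thus your argument is not only correct but also repairs an arithmetic slip in the paper's own proof.
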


\begin{proof}
Introducing the function
\begin{equation*}
	\xi(z) := \frac{z 2^{z-1}}{2^z - z - 1}, \qquad z\ge 2,
\end{equation*}
we note that
\begin{equation*}
	\varphi_{i,j;k} = \frac{\xi(j)}{2^i}, \qquad 1 \le i \le i-1, \quad j\ge 2, \quad k\ge 1.
\end{equation*}
A tedious computation reveals that $\xi$ is increasing on $[4,\infty)$ with $\xi(2) = 4$, $\xi(3) = 3$ and $\xi(4) = 32/9$. As $\xi(3)<\xi(4)$, there holds
\begin{equation*}
	\varphi_{1,2;k} = \frac{\xi(2)}{2} \le \frac{1+\xi(3)}{2} \le 1 + \frac{\xi(k)}{2} = 1 + \varphi_{1,k;2}
\end{equation*}
for $k> j=2$ and
\begin{equation*}
	\varphi_{i,j;k} = \frac{\xi(j)}{2^i} \le \frac{\xi(k)}{2^i} \le 1 + \varphi_{i,k;j}, \qquad 1\le i \le j-1,
\end{equation*}
for $k>j\ge 3$. We complete the proof by noticing that~\eqref{bCond} with $\alpha_1=1$ is obviously satisfied for $k=j\ge 2$.
\end{proof}

Coming back to numerical simulations, we first mention that the total mass $\|\psi(t)\|_1$ is conserved during the computation, which is expected in view of~\eqref{TMC}.

\begin{figure}[htbp]
  \centering

  \begin{subfigure}[b]{0.48\textwidth}
    \centering
    \includegraphics[width=\linewidth]{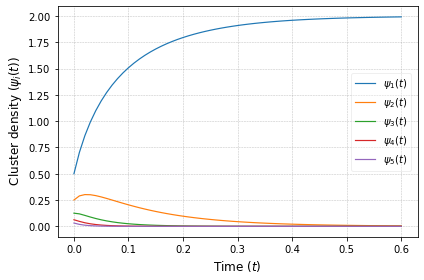}
    \caption{$\varphi_{i,j,k} =\frac{2}{j-1} $}
    \label{PSI_Case1}
  \end{subfigure}
  \hspace{0.02\textwidth}
  \begin{subfigure}[b]{0.48\textwidth}
    \centering
    \includegraphics[width=\linewidth]{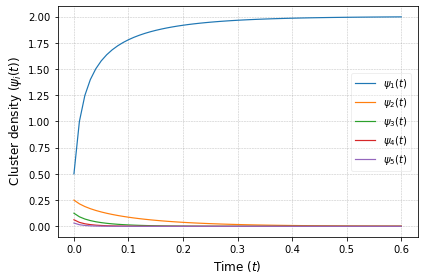}
    \caption{$\varphi_{i,j;k} = j \delta_{i,1}$}
    \label{PSI_Case2}
  \end{subfigure}

  \vspace{1em}

  \begin{subfigure}[b]{0.48\textwidth}
    \centering
    \includegraphics[width=\linewidth]{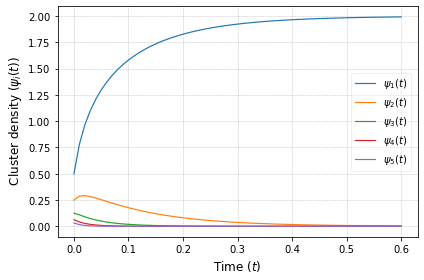}
    \caption{$ \varphi_{i,j;k} = \frac{1}{2^i} \frac{j 2^{j-1}}{2^j - j - 1}$}
    \label{PSI_Case3}
  \end{subfigure}

  \caption{Evolution of the cluster densities $\psi_i(t)$, $1\le i \le 5$, for fixed $(\varphi_{i,j;k})$ and  $\Gamma_{i,j} = i^2 j^2$}
  \label{PSI_Cases}
\end{figure}

We next turn to the evolution of cluster densities $\psi_i(t)$ under three different fragmentation mechanisms, while fixing the collision kernel $\Gamma_{i,j} = (ij)^2$. The outcome for $1\le i \le 5$ is depicted in \Cref{PSI_Cases}. In each case, the system evolves toward a monodisperse system, where all mass is ultimately distributed among $1$-clusters, a feature which is in agreement with the outcome of \Cref{Prop:LTB}. Indeed, \Cref{Prop:LTB} predicts that fragmentation drives the system toward $(\psi_1^\infty \delta_{i,1})_{i\ge 1}$, with $\psi_1^{\infty}=\|\psi^{\rm{in}}\|_1$, as $t \to \infty$. \Cref{PSI_Case1}, corresponding to uniform fragmentation, exhibits transient growth in small clusters before the mass concentrates in $1$-clusters. In \Cref{PSI_Case2}, where all collisions produce only $1$-clusters, the transition is faster, with minimal intermediate structure. \Cref{PSI_Case3}, which features an exponentially decaying daughter distribution, shows a similar trend to \Cref{PSI_Case1} but more strongly favours the formation of smaller fragments, thereby accelerating the depletion of larger clusters. These results underline how the specific form of the daughter distribution function shapes the dynamics and rate of convergence toward the monodisperse steady state.

\begin{figure}[htbp]
  \centering

  \begin{subfigure}[b]{0.48\textwidth}
    \centering
    \includegraphics[width=\linewidth]{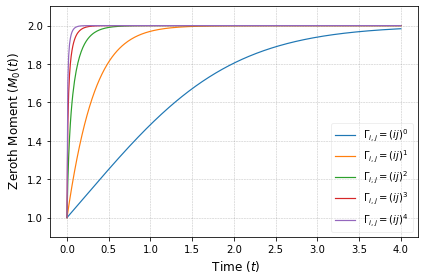}
    \caption{$\varphi_{i,j;k} =\frac{2}{j-1} $}
    \label{M0_Case1}
  \end{subfigure}
  \hspace{0.02\textwidth}
  \begin{subfigure}[b]{0.48\textwidth}
    \centering
    \includegraphics[width=\linewidth]{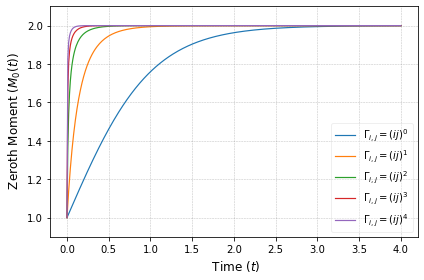}
    \caption{$\varphi_{i,j;k} = j \delta_{i,1}$}
    \label{M0_Case2}
  \end{subfigure}

  \vspace{1em}

  \begin{subfigure}[b]{0.48\textwidth}
    \centering
    \includegraphics[width=\linewidth]{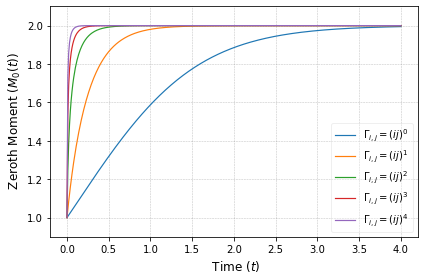}
    \caption{$ \varphi_{i,j;k} = \frac{1}{2^i} \frac{j 2^{j-1}}{2^j - j - 1}$}
    \label{M0_Case3}
  \end{subfigure}

  \caption{Evolution of the zeroth moment $\|\psi(t)\|_0$ for varying $(\Gamma_{i,j})$ and fixed $(\varphi_{i,j;k})$}
  \label{M0_Cases}
\end{figure}

We next compare the evolution of the zeroth moment $\|\psi(t)\|_0$, which quantifies the total number of clusters, under varying collision kernels $\Gamma_{i,j} = (ij)^{\alpha}$ for $0\le \alpha \le 4$ and fixed fragmentation laws $(\varphi_{i,j;k})$. The corresponding numerical simulations are displayed in \Cref{M0_Cases}. In all three cases, $\|\psi(t)\|_0$ increases from its initial value as larger clusters fragment into smaller ones, with the rate of increase depending sensitively on the exponent $\alpha$ in $(\Gamma_{i,j})$. Higher exponents enhance collision rates between large clusters, thereby hastening fragmentation and increasing $\|\psi(t)\|_0$ more rapidly. \Cref{M0_Case2}, with $\varphi_{i,j;k} = j \delta_{i,1}$, leads to the fastest saturation of $\|\psi(t)\|_0$ due to its exclusive production of monomers.  \Cref{M0_Case1} and \Cref{M0_Case3} exhibit similar trends, though the latter induces slightly faster dynamics due to its strong bias toward the production of smaller fragments. These results emphasize the interplay between collision intensity and daughter distribution in shaping the dynamics of cluster numbers.

\begin{figure}[htbp]
  \centering

  \begin{subfigure}[b]{0.48\textwidth}
    \centering
    \includegraphics[width=\linewidth]{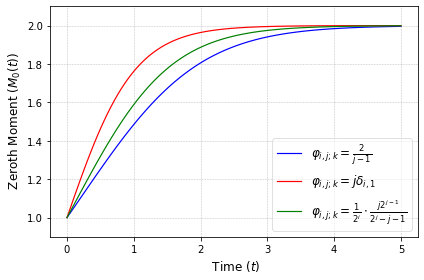}
    \caption{$\Gamma_{i,j} = 1$}
    \label{M0(Gamma=0)}
  \end{subfigure}
  \hspace{0.02\textwidth}
  \begin{subfigure}[b]{0.48\textwidth}
    \centering
    \includegraphics[width=\linewidth]{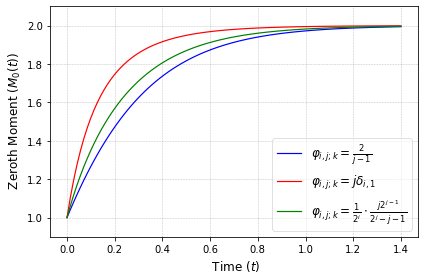}
    \caption{$\Gamma_{i,j} = ij$}
    \label{M0(Gamma=1)}
  \end{subfigure}

  \vspace{1em}

  \begin{subfigure}[b]{0.48\textwidth}
    \centering
    \includegraphics[width=\linewidth]{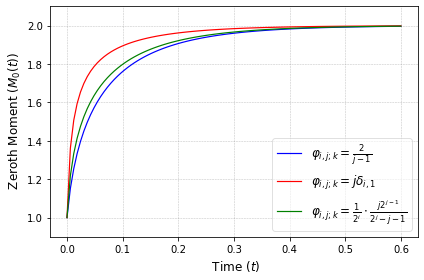}
    \caption{$\Gamma_{i,j} = i^2 j^2$}
    \label{M0(Gamma=2)}
  \end{subfigure}

  \caption{Evolution of the zeroth moment $\|\psi(t)\|_0$ for different  $(\Gamma_{i,j})$ and varying $\varphi_{i,j;k}$.}
  \label{M0_Gamma}
\end{figure}

We finally study in \Cref{M0_Gamma} the time evolution of the zeroth moment $\|\psi(t)\|_0$ for different daughter distribution functions $(\varphi_{i,j;k})$ under three collision kernels $(\Gamma_{i,j})$. The curves correspond to $\Gamma_{i,j} = 1$ (slowest rise), $\Gamma_{i,j} = i j$ (moderate rise), and $\Gamma_{i,j} = i^2 j^2$ (fastest rise), reflecting how collision rates scale with cluster sizes. The blue curve ($\varphi_{i,j;k} = \frac{2}{j-1}$) shows a moderate increase with uniform fragments, the red curve ($\varphi_{i,j;k} = j \delta_{i,1}$) rises fastest by producing only 1-clusters, and the green curve ($\varphi_{i,j;k} = \frac{1}{2^i} \frac{j 2^{j-1}}{2^j - j - 1}$) lies inbetween, favouring smaller fragments.

In conclusion, the dynamics of cluster fragmentation leads to a complete disintegration into {$1$-clusters which stops} further evolution, with the zeroth moment $\|\psi(t)\|_0$ stabilizing at the initial total mass $\|\psi^{\rm{in}}\|_1$. While the long term steady state features only $1$-clusters, the convergence to this state is distinctly influenced by both the fragment distribution function $(\varphi_{i,j;k})$ and the collision kernel $(\Gamma_{i,j})$, with higher kernel intensities accelerating the transition, as reflected in the different growth rates of $\|\psi(t)\|_0$ revealed by the numerical experiments.

\subsection*{Funding}  NA
\subsection*{Acknowledgements} MA expresses deep gratitude to Jindal Global Business School, O.P. Jindal Global University, for its invaluable support in providing essential resources.

\end{document}